\def\H {{\mathcal H}}
\def\V {{\mathcal V}}
\def\M {{\mathcal M}}
\def\A {{\mathbb A}}
\def\R {\mathbb{R}}
\def\N {\mathbb{N}}
\def\D {{\mathfrak D}}
\def\eps{\varepsilon}
\def\e{{\rm e}}
\def\d{{\rm d}}
\def\ddt{\frac{\d}{\d t}}
\def\i{{\rm i}}
\def \l {\langle}
\def \r {\rangle}
\def \and {{\qquad\text{and}\qquad}}
\def \imply {{\quad\Rightarrow\quad}}
\newtheorem{proposition}{Proposition}
\newtheorem{theorem}[proposition]{Theorem}
\newtheorem{lemma}[proposition]{Lemma}
\theoremstyle{definition}
\newtheorem*{remark}{Remark}
\numberwithin{equation}{section}
\def \au {\rm}
\def \ti {\it}
\def \jou {\rm}
\def \bk {\it}
\def \no#1#2#3 {{\bf #1} (#3), #2.}
\def \eds#1#2#3 {#1, #2, #3.}
\title[Timoshenko systems with Gurtin-Pipkin thermal law]
{On the stability of Timoshenko systems\\
with Gurtin-Pipkin thermal law}
\author[F. Dell'Oro and V. Pata]
{Filippo Dell'Oro and Vittorino Pata}
\address{Politecnico di Milano - Dipartimento di Matematica ``F.\ Brioschi''
\newline\indent
Via Bonardi 9, 20133 Milano, Italy}
\email{filippo.delloro@mail.polimi.it}
\email{vittorino.pata@polimi.it}
\subjclass[2010]{35B40, 45K05, 47D03, 74D05, 74F05}
\keywords{Timoshenko beam, Gurtin-Pipkin law, contraction semigroup, exponential stability,
stability number}
\begin{document}

\begin{abstract}
We analyze the differential system
$$
\begin{cases}
\rho_1 \varphi_{tt} -\kappa(\varphi_x +\psi)_x = 0\\
\noalign{\vskip2mm}
\rho_2 \psi_{tt} -b\psi_{xx} +\kappa(\varphi_x +\psi) +\delta\theta_x= 0\\
\noalign{\vskip1mm}
\displaystyle
\rho_3 \theta_t  - \frac{1}{\beta}\int_0^\infty g(s)\theta_{xx}(t-s)\,\d s + \delta\psi_{tx}=0
\end{cases}
$$
describing a Timoshenko beam coupled with a temperature evolution of Gurtin-Pipkin type.
A necessary and sufficient condition for exponential stability is established in terms
of the structural parameters of the equations. In particular,
we generalize previously known results on the Fourier-Timoshenko and the Cattaneo-Timoshenko beam
models.
\end{abstract}

\maketitle

\section{Introduction}

\noindent
Given a real interval ${\mathfrak I}=[0,\ell]$,
we consider the thermoelastic beam model of Timoshenko type \cite{TIM}
\begin{equation}
\label{TIMO0}
\begin{cases}
\rho_1 \varphi_{tt} -\kappa(\varphi_x +\psi)_x = 0,\\
\rho_2 \psi_{tt} -b\psi_{xx} +\kappa(\varphi_x +\psi) +\delta\theta_x= 0,\\
\rho_3 \theta_t + q_x +\delta\psi_{tx}=0,
\end{cases}
\end{equation}
where the unknown variables
$$\varphi,\psi,\theta,q: \,(x,t)\in {\mathfrak I}\times [0,\infty) \mapsto \R$$
represent the transverse displacement of a beam with reference configuration ${\mathfrak I}$,
the rotation angle of a filament, the relative temperature (i.e.\ the temperature variation
field from an equilibrium reference value) and the heat flux vector, respectively.
Here, $\rho_1,\rho_2,\rho_3$ as well as $\kappa,b,\delta$ are
strictly positive fixed constants. The system is complemented with
the Dirichlet boundary conditions for $\varphi$ and $\theta$
$$
\varphi(0,t)=\varphi(\ell,t)=\theta(0,t)=\theta(\ell,t)=0,
$$
and the Neumann one for $\psi$
$$
\psi_x(0,t)=\psi_x(\ell,t)=0.
$$
Such conditions, commonly adopted in the literature,
seem to be the most feasible from a physical viewpoint.
To complete the picture,
a further relation is needed:
the so-called constitutive law for the heat flux,
establishing a link between $q$ and $\theta$.
This is what really characterizes the dynamics,
since no mechanical dissipation is present in the system,
and any possible loss of energy can be due only to thermal effects.

\subsection{The Fourier thermal law}
A first choice is to assume the classical Fourier law of heat conduction
\begin{equation}
\label{FOURIER}
\beta q + \theta_x = 0,
\end{equation}
where $\beta>0$ is a fixed constant.
In which case, the third equation of \eqref{TIMO0} becomes
$$
\rho_3 \theta_t - \frac1\beta \theta_{xx} +\delta\psi_{tx}=0.
$$
The exponential stability of the resulting Timoshenko-Fourier system has been analyzed in~\cite{RR}.
There, the authors introduce the so-called stability number\footnote{The notion
of stability number is actually introduced in
the subsequent paper \cite{SJR}, defined there as $\chi={\kappa}/{\rho_1}-{b}/{\rho_2}$.
The difference is clearly irrelevant with respect to the relation $\chi=0$.
The motivation of our choice
of $\chi$ is to
render more direct the comparison
with the Cattaneo law.}
$$
\chi=\frac{\rho_1}{\kappa}-\frac{\rho_2}{b},
$$
representing the difference of the inverses of the propagation speeds.
The main result of~\cite{RR} reads as follows:
the contraction semigroup generated by \eqref{TIMO0}-\eqref{FOURIER}
acting on the triplet $(\varphi,\psi,\theta)$ is exponentially stable
(in the natural weak energy space) if and only if $\chi=0$.

\subsection{The Cattaneo thermal law}
The drawback of the Fourier law lies in the physical paradox of infinite propagation speed
of (thermal) signals, a typical side-effect
of parabolicity. A different model, removing this paradox, is the
Cattaneo law~\cite{CAT},
namely, the differential perturbation
of~\eqref{FOURIER}
\begin{equation}
\label{MC}
\tau q_t + \beta q + \theta_x = 0,
\end{equation}
for $\tau>0$ small.
A natural question is whether the semigroup generated by \eqref{TIMO0} coupled with~\eqref{MC},
now acting on the state variable $(\varphi,\psi,\theta,q)$, remains exponentially stable
within the condition $\chi=0$ above. As shown in~\cite{SR}, the answer is negative:
exponential stability can never occur when $\chi=0$.
More recently, in~\cite{SJR} a new stability number is introduced
in order to deal with the Timoshenko-Cattaneo system, that is,\footnote{The value of $\chi_\tau$
in \cite{SJR} differs from ours for a multiplicative constant.}
$$
\chi_\tau =  \bigg[\frac{\rho_1}{\rho_3 \kappa}-\tau \bigg]
\bigg[\frac{\rho_1}{\kappa}-\frac{\rho_2}{b}\bigg] - \tau \frac{\,\rho_1\delta^2}{\rho_3\kappa b}.
$$
The system is shown to be exponentially stable if and only if $\chi_\tau=0$.
Quite interestingly, the Fourier case is fully recovered in the limit $\tau\to 0$,
when \eqref{MC} collapses into~\eqref{FOURIER}. Indeed, for $\tau=0$ the equality
$$\chi_0=\frac{\rho_1}{\rho_3 \kappa}\,\chi
$$
holds, which tells at once that
$$\chi_0=0\quad\Leftrightarrow\quad\chi=0.$$
It is worth mentioning that
the proof of exponential stability in~\cite{SJR} is carried out via linear semigroup techniques, whereas
the analogous result of \cite{RR} for the Fourier case is obtained by constructing explicit energy functionals.

\subsection{The Gurtin-Pipkin thermal law}
The aim of the present work is studying the Timoshenko system \eqref{TIMO0}
assuming the Gurtin-Pipkin heat conduction law for the heat flux \cite{GP}.
More precisely, we consider the constitutive equation
\begin{equation}
\label{GPlaw}
\beta q(t) + \int_{0}^\infty g(s)\theta_x(t-s)\,\d s =0,
\end{equation}
where $g$, called the memory kernel, is a (bounded)
convex summable function on $[0,\infty)$ of total mass
$$
\int_0^\infty g(s)\,\d s = 1,
$$
whose properties will be specified in more detail later on.
Equation \eqref{GPlaw} can be viewed as a memory relaxation of the Fourier law \eqref{FOURIER},
inducing (similarly to the Cattaneo law) a fully hyperbolic mechanism of heat transfer.
In this perspective, it may be considered a more realistic description of physical reality.
Accordingly, system \eqref{TIMO0} turns into
\begin{equation}
\label{sys}
\begin{cases}
\rho_1 \varphi_{tt} -\kappa(\varphi_x +\psi)_x = 0,\\
\noalign{\vskip2mm}
\rho_2 \psi_{tt} -b\psi_{xx} +\kappa(\varphi_x +\psi) +\delta\theta_x= 0,\\
\noalign{\vskip1mm}
\displaystyle
\rho_3 \theta_t  - \frac{1}{\beta}\int_0^\infty g(s)\theta_{xx}(t-s)\,\d s + \delta\psi_{tx}=0.
\end{cases}
\end{equation}
Rephrasing system \eqref{sys} within the history framework of Dafermos \cite{DAF},
we construct a contraction semigroup $S(t)$ of solutions acting on a suitable Hilbert space $\H$,
accounting for the presence of the memory. Then,
introducing the stability number
$$
\chi_g =  \bigg[\frac{\rho_1}{\rho_3 \kappa}-\frac{\beta}{g(0)} \bigg]
\bigg[\frac{\rho_1}{\kappa}-\frac{\rho_2}{b}\bigg]
-\frac{\beta}{g(0)}\frac{\,\rho_1\delta^2}{\rho_3\kappa b},
$$
our main theorem can be stated as follows.

\begin{theorem}
\label{MAIN}
The semigroup $S(t)$ is exponentially stable\footnote{We recall
that $S(t)$ is said to be exponentially stable on $\H$ if there are $\omega>0$ and $C\geq 1$
such that
$$\|S(t)z\|_{\H}\leq C\e^{-\omega t}\|z\|_\H,\quad\forall z\in\H.$$} if and only if $\chi_g=0$.
\end{theorem}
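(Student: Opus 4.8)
The plan is to characterize exponential stability through the Gearhart--Pr\"uss resolvent criterion. Since $S(t)$ is a contraction semigroup on the Hilbert space $\H$ with infinitesimal generator $A$, it is exponentially stable if and only if the imaginary axis lies in the resolvent set, $\i\R\subset\rho(A)$, together with the uniform bound
$$
\sup_{\lambda\in\R}\big\|(\i\lambda - A)^{-1}\big\|_{\mathcal{L}(\H)}<\infty.
$$
The entire argument thus reduces to a quantitative study of the resolvent equation $(\i\lambda-A)z=f$ for real $\lambda$, where the state vector $z$ collects the displacement and its velocity, the rotation and its velocity, the temperature $\theta$, and the Dafermos history variable $\eta$ accounting for the memory.

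First I would diagonalize the spatial part. Because the Dirichlet and Neumann boundary conditions are compatible with a common eigenbasis of $-\partial_{xx}$, each component can be expanded along the orthonormal systems $\{\sin(\omega_n x)\}$ and $\{\cos(\omega_n x)\}$, with $\omega_n\sim n$. Projecting onto the $n$-th mode turns the PDE system into a family of finite-dimensional problems coupled to the infinite-dimensional history component. I would solve explicitly for $\eta$ and substitute it back; this produces the transfer function $\hat g(\lambda)=\int_0^\infty g(s)\e^{-\i\lambda s}\,\d s$ of the kernel, whose high-frequency behavior, obtained by integration by parts and Riemann--Lebesgue (using the convexity and summability of $g$, which give $g'\in L^1$), satisfies $\i\lambda\,\hat g(\lambda)\to g(0)$ as $|\lambda|\to\infty$. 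This is precisely the mechanism by which $g(0)$, and hence the quantity $\beta/g(0)$, enters the stability number in the role that $\tau$ plays for the Cattaneo law.

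For the sufficiency direction $(\chi_g=0\imply\text{stability})$ I would argue by contradiction: assuming a sequence $\lambda_n$ with $|\lambda_n|\to\infty$ and unit vectors $z_n$ such that $\|(\i\lambda_n-A)z_n\|_\H\to 0$, the dissipation built into the Gurtin--Pipkin term first yields control of the temperature and the history; I would then bootstrap, successively estimating the angular velocity, the rotation, the velocity and the displacement through the coupled equations. The condition $\chi_g=0$ is exactly the algebraic relation forcing the leading-order contributions in this chain to cancel, so that $\|z_n\|_\H\to 0$, contradicting $\|z_n\|_\H=1$. Conversely, for necessity, if $\chi_g\neq 0$ I would construct an explicit sequence of approximate eigenvectors, choosing $\lambda_n$ in resonance with the mode $\omega_n$, for which $\chi_g$ emerges as the nonvanishing leading coefficient of an asymptotic expansion; along this sequence $\|(\i\lambda_n-A)z_n\|_\H/\|z_n\|_\H\to 0$, so the resolvent is unbounded and stability fails.

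The main obstacle I anticipate is the high-frequency asymptotic analysis tying everything to $\chi_g$. Unlike the Cattaneo model, where the memory collapses to a single ODE and the reduction is genuinely finite-dimensional, here the history variable is infinite-dimensional and the transfer function $\hat g(\lambda)$ must be controlled uniformly in $\lambda$. Extracting the precise expansion of $\hat g(\lambda)$ to the order at which $g(0)$ and the combination defining $\chi_g$ become visible, and making the bootstrap estimates uniform across all modes $\omega_n$ simultaneously, will require the full strength of the assumed regularity and convexity of the kernel. Verifying $\i\R\subset\rho(A)$, in particular the absence of purely imaginary eigenvalues, is comparatively routine but must still be checked in order to invoke the Gearhart--Pr\"uss criterion.
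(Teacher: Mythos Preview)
Your necessity argument is essentially the same as the paper's: both construct an explicit mode-wise sequence $z_n$ solving $(\i\lambda_n-A)z_n=\zeta_n$ with $\lambda_n\sim n$ chosen in resonance with the first equation, reduce to a finite algebraic system via the sine/cosine ansatz, solve the history equation explicitly so that the Fourier transform of the kernel appears, and use Riemann--Lebesgue (equivalently, your expansion $\i\lambda\,\hat g(\lambda)\to g(0)$ is exactly the paper's $\hat\mu(\lambda_n)\to 0$) to show that $|A_n|\not\to 0$ precisely when $\chi_g\neq 0$, whence $\|z_n\|_\H\to\infty$.

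Your sufficiency argument, however, takes a genuinely different route from the paper. You propose a Gearhart--Pr\"uss contradiction argument (unit $z_n$ with $(\i\lambda_n-A)z_n\to 0$, then bootstrap from the dissipation on $\eta$ and $\theta$ through the coupled equations), in the spirit of what \cite{SJR} does for the Cattaneo model. The paper instead proves sufficiency by constructing explicit energy-like functionals $I,J,K,L$ (Section~\ref{Aux}), combining them into a Lyapunov functional $G_\eps=E+\eps^2 M_\eps$, and closing a differential inequality via Gronwall; the condition $\chi_g=0$ is used to kill a single cross term $\langle\psi_t,\varphi_{tx}\rangle$ arising in the computation of $\dot K$. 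Both approaches are valid here. Your resolvent route is arguably cleaner for the linear problem and makes the appearance of $\chi_g$ as a high-frequency cancellation very transparent; the paper's multiplier route, as the authors themselves remark, has the advantage that the same functionals carry over verbatim to nonlinear versions of the system (existence of absorbing sets, etc.), which a pure spectral argument does not give.
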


As we will see in the next section, Theorem~\ref{MAIN} actually subsumes
and generalizes all the previously known results on the exponential decay
properties of the thermoelastic Timoshenko system~\eqref{TIMO0}.

\subsection*{Plan of the paper}
In Section~\ref{Comparison} we compare Timoshenko systems
of the form~\eqref{TIMO0}
subject to different laws of heat conduction,
viewed as particular instances of~\eqref{sys} for suitable choices of the memory kernel.
The comparison with the Timoshenko-Cattaneo system of~\cite{SR,SJR},
only formal at this stage, is rendered
rigorous in the final Section~\ref{GoodComp}.
After introducing some notation (Section~\ref{FSN}),
in Section~\ref{Semigroup} we define the semigroup
$S(t)$ describing the solutions to~\eqref{sys}.
The subsequent three sections are devoted to the proof of Theorem~\ref{MAIN}.
Firstly, we introduce some auxiliary functionals (Section~\ref{Aux}), needed in the proof
of the sufficiency part of the theorem carried out in Section~\ref{Suf}. The necessity
of the condition $\chi_g=0$ in order for exponential stability to occur is proved in
Section~\ref{Nec}.

\section{Comparison with Earlier Results}
\label{Comparison}

\subsection{The Fourier case}
The Fourier law \eqref{FOURIER} can be seen as a (singular) limit of the Gurtin-Pipkin law~\eqref{GPlaw}.
Indeed, defining the $\eps$-scaling of the memory kernel $g$ by
$$g_\eps(s)=\frac1\eps g\Big(\frac{s}\eps\Big),\quad\eps>0,$$
we consider in place of the original \eqref{GPlaw} the constitutive equation
\begin{equation}
\label{GPlawS}
\beta q(t) + \int_{0}^\infty g_\eps(s)\theta_x(t-s)\,\d s =0.
\end{equation}
Since $g_\eps\to \delta_0$ in the distributional sense, where $\delta_0$ denotes the Dirac mass at $0^+$,
it is clear that, in the limit $\eps\to 0$, equation~\eqref{GPlawS} reduces to
the classical constitutive law~\eqref{FOURIER}.
According to Theorem~\ref{MAIN}, exponential stability for the
Timoshenko-Gurtin-Pipkin model with memory kernel $g_\eps$ occurs if and only if
$$\chi_{g_\eps} =  \bigg[\frac{\rho_1}{\rho_3 \kappa}-\frac{\beta\eps}{g(0)}
\bigg]\bigg[\frac{\rho_1}{\kappa}-\frac{\rho_2}{b}\bigg]
-\frac{\beta\eps}{g(0)} \frac{\,\rho_1\delta^2}{\rho_3\kappa b}=0.
$$
Letting $\eps\to 0$, we recover the condition
$$\chi_{\delta_0} = \frac{\rho_1}{\rho_3 \kappa}\,\chi=0
$$
of the Fourier case. The convergence of the Timoshenko-Gurtin-Pipkin model to the
Timoshenko-Fourier one as $\eps\to 0$ can be made rigorous
within the proper functional setting, along the same lines of~\cite{Amnesia}.

\subsection{The Cattaneo case}
The Cattaneo law~\eqref{MC} can be deduced as a particular
instance of \eqref{GPlaw},
corresponding to the memory kernel
$$
g_\tau(s)=\frac{\beta}{\tau} \,\e^{-\frac{s \beta}{\tau}}.
$$
Indeed, changing the integration variable, we can write the flux vector $q$ in the form
$$q(t)=-\frac1\beta\int_{-\infty}^t g_\tau(t-s)\theta_x(s)\,\d s.
$$
Since
$$g_\tau'(s)=-\frac{\beta}\tau\,g_\tau(s),
$$
we draw the relation
$$q_t(t)
=-\frac1\beta\int_{-\infty}^t g_\tau'(t-s)\theta_x(s)\,\d s-\frac{g_\tau(0)}\beta\theta_x(t)
=\frac1\tau\int_{-\infty}^t g_\tau(t-s)\theta_x(s)\,\d s-\frac1\tau\theta_x(t),
$$
which is nothing but \eqref{MC}.
Besides, we have
the equality of the stability numbers
$$\chi_{g_\tau}=\chi_\tau.
$$

\subsection{The Coleman-Gurtin case}
A further interesting model, midway between the
Fourier and the Gurtin-Pipkin one,
is obtained by assuming the (parabolic-hyperbolic)
Coleman-Gurtin law for the heat flux, namely,
\begin{equation}
\label{CGlaw}
\beta q(t) +(1-\alpha)\theta_x(t)+\alpha \int_{0}^\infty g(s)\theta_x(t-s)\,\d s =0,\quad\alpha\in(0,1).
\end{equation}
The limit cases $\alpha=0$ and $\alpha=1$ correspond to the fully parabolic Fourier case and the fully
hyperbolic Gurtin-Pipkin case.
The corresponding Timoshenko-Coleman-Gurtin system, whose third equation now reads
$$
\rho_3 \theta_t  - \frac{1}{\beta}\bigg[(1-\alpha)\theta_{xx}
+\alpha \int_0^\infty g(s)\theta_{xx}(t-s)\,\d s\bigg] + \delta\psi_{tx}=0,
$$
generates (similarly to the Timoshenko-Gurtin-Pipkin system)
a contraction semigroup $\Sigma(t)$ on $\H$.
For this system, the following theorem holds.

\begin{theorem}
\label{MAIN-CG}
The semigroup $\Sigma(t)$ is exponentially stable if and only if $\chi=0$.
\end{theorem}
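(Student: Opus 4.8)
The plan is to follow the strategy developed for Theorem~\ref{MAIN}, recasting the Coleman-Gurtin system in the history framework of Dafermos and invoking the resolvent characterization of exponential stability for contraction semigroups on a Hilbert space (the Gearhart--Pr\"uss theorem). Denoting by $\A$ the infinitesimal generator of $\Sigma(t)$, it suffices to prove, when $\chi=0$, that the imaginary axis lies in the resolvent set, $\i\R\subset\rho(\A)$, together with the uniform bound $\sup_{\lambda\in\R}\|(\i\lambda-\A)^{-1}\|<\infty$; and to exhibit, when $\chi\neq0$, a sequence violating this bound.

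The only structural change with respect to the pure Gurtin-Pipkin model occurs in the third equation, where the instantaneous Fourier contribution $-\frac{1-\alpha}{\beta}\theta_{xx}$ now accompanies the memory term weighted by $\alpha$. Computing the dissipation $\Re\langle\A z,z\rangle$ one recovers the memory dissipation inherited from the Gurtin-Pipkin part, multiplied by $\alpha$, plus the genuinely instantaneous term $-\frac{1-\alpha}{\beta}\|\theta_x\|^2$. This extra term is exactly what renders the effective value of the kernel at the origin infinite: replacing $g(0)$ by $+\infty$ in the definition of $\chi_g$ collapses it into $\frac{\rho_1}{\rho_3\kappa}\chi$, which vanishes if and only if $\chi=0$. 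This is the heuristic reason why the stability threshold here is the Fourier condition rather than the full Gurtin-Pipkin one.

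For the sufficiency part I would write the resolvent equation $\i\lambda z-\A z=f$ and run, under the assumption $\chi=0$, the energy scheme used for Theorem~\ref{MAIN} in Section~\ref{Suf}. The instantaneous dissipation yields at once the control $\|\theta_x\|^2\lesssim\|z\|_\H\|f\|_\H$, stronger than what the memory alone provides in the Gurtin-Pipkin case and sparing the delicate high-frequency treatment of the history variable. The remaining task is to transfer this estimate to the mechanical unknowns through the auxiliary functionals of Section~\ref{Aux}, controlling in turn $\|\psi_x\|$, $\|\varphi_x+\psi\|$ and the velocities $\|\psi_t\|,\|\varphi_t\|$, and thereby bounding $\|z\|_\H$ by $\|f\|_\H$ uniformly in $\lambda$. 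The hypothesis $\chi=0$ is what makes this transfer succeed: when the propagation speeds $\kappa/\rho_1$ and $b/\rho_2$ coincide, the cross terms generated by the coupling cancel, whereas for $\chi\neq0$ an obstruction growing with $|\lambda|$ survives.

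The necessity part is, I expect, the main obstacle, and I would handle it by the direct spectral analysis of Section~\ref{Nec}. Expanding on the spatial basis $\sin(n\pi x/\ell)$, $\cos(n\pi x/\ell)$ reduces the eigenvalue problem to an algebraic system in the Fourier coefficients, governed by a dispersion relation. Assuming $\chi\neq0$, the aim is to produce eigenvalues $\lambda_n$ with $\Re\lambda_n\to0^-$ while $|\Im\lambda_n|\to\infty$, so that no uniform spectral gap can persist; equivalently, to build normalized data $f_n$ with $\|(\i\lambda_n-\A)^{-1}f_n\|_\H\to\infty$. The technically demanding point is the asymptotic expansion of the relevant root of the dispersion relation, which must be shown to have a real part decaying to zero precisely in the regime $\chi\neq0$, while staying bounded away from the axis when $\chi=0$.
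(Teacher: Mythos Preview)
Your plan is sound as a direct attack, and the paper explicitly says such a proof ``can be given \ldots\ following the lines of the next sections'' and is in fact ``much simpler'' thanks to the instantaneous dissipation $-\tfrac{1-\alpha}{\beta}\theta_{xx}$. However, the argument the paper actually offers is a different, indirect one: it obtains Theorem~\ref{MAIN-CG} as a \emph{byproduct} of Theorem~\ref{MAIN} by a singular-limit trick. Namely, one applies Theorem~\ref{MAIN} to the pure Gurtin--Pipkin system with the rescaled kernel
\[
g_\eps(s)=\frac{1-\alpha}{\eps}\,g\!\Big(\frac{s}{\eps}\Big)+\alpha\,g(s),
\]
whose stability number $\chi_{g_\eps}$ vanishes precisely on a curve converging, as $\eps\to0$, to the condition $\chi=0$; meanwhile $g_\eps\to(1-\alpha)\delta_0+\alpha g$ in the distributional sense, which reproduces the Coleman--Gurtin law. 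So the paper reduces the Coleman--Gurtin case to the already-proved Gurtin--Pipkin theorem rather than re-running the multipliers.

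A secondary methodological difference: for sufficiency you propose the Gearhart--Pr\"uss resolvent route, while the paper's machinery (Sections~\ref{Aux}--\ref{Suf}) is entirely time-domain, via explicit Lyapunov functionals; the paper even stresses this choice in the remark closing Section~\ref{Suf}. Your resolvent strategy is perfectly valid here, and your heuristic that the instantaneous term effectively sends $g(0)\to+\infty$ in $\chi_g$, collapsing it to $\tfrac{\rho_1}{\rho_3\kappa}\chi$, is exactly the point the paper is making with the limit $\eps\to0$. The trade-off is that the paper's limit argument avoids redoing any estimates, at the cost of relying on a singular-perturbation passage that is only sketched; your direct approach is self-contained but requires adapting the functional $K$ (its construction in Lemma~\ref{Lkappa} hinges on $\chi_g=0$, and in the Coleman--Gurtin setting the analogous cancellation must be rewritten to exploit $\chi=0$ together with the new dissipation $\|\theta_x\|^2$).
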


Hence, the picture is exactly the same as in the Fourier case. This, as observed in \cite{SJR},
is due to the predominant character of parabolicity.
Theorem~\ref{MAIN-CG} can be given a direct proof, following the lines of the next sections.
In fact, the situation here is much simpler, do to the presence of instantaneous dissipation
given by the term $-\theta_{xx}$
in the equation. However, it is also possible to obtain Theorem~\ref{MAIN-CG} as a byproduct
of Theorem~\ref{MAIN}. To this end, it is enough to consider the Timoshenko-Gurtin-Pipkin system
with kernel
$$g_\eps(s)=\frac{1-\alpha}\eps g\Big(\frac{s}\eps\Big)+\alpha g(s),$$
whose exponential stability takes place if and only if
$$\chi_{g_\eps} =  \bigg[\frac{\rho_1}{\rho_3 \kappa}
-\frac{\beta\eps}{(1-\alpha+\alpha\eps)g(0)} \bigg]\bigg[\frac{\rho_1}{\kappa}-\frac{\rho_2}{b}\bigg]
-\frac{\beta\eps}{(1-\alpha+\alpha\eps)g(0)} \frac{\,\rho_1\delta^2}{\rho_3\kappa b}=0.
$$
Performing the limit $\eps\to 0$, we obtain the distributional convergence
$$g_\eps\to (1-\alpha)\delta_0+\alpha g,
$$
yielding in turn
$$
\int_0^\infty g_\eps(s)\theta_{xx}(t-s)\,\d s
\to
(1-\alpha)\theta_{xx}
+\alpha \int_0^\infty g(s)\theta_{xx}(t-s)\,\d s.
$$
Accordingly, we see that
$$\chi_{(1-\alpha)\delta_0+\alpha g} = \frac{\rho_1}{\rho_3 \kappa}\,\chi,
$$
and we recover the same stability condition of the Timoshenko-Fourier system.

\subsection{Heat conduction of type III}
We finally mention another model, resulting from the constitutive law
of type III of Green-Naghdi for the heat flux \cite{GN0,GN1,GN2}
\begin{equation}
\label{TIII}
\beta q + \theta_x +d p_x= 0,\quad d>0,
\end{equation}
where
$$p(t)=p(0)+\int_0^t \theta(r)\,\d r
$$
is the so-called thermal displacement. Plugging \eqref{TIII} into \eqref{TIMO0},
one obtains
$$
\begin{cases}
\rho_1 \varphi_{tt} -\kappa(\varphi_x +\psi)_x = 0,\\
\rho_2 \psi_{tt} -b\psi_{xx} +\kappa(\varphi_x +\psi) +\delta p_{tx}= 0,\\
\rho_3 p_{tt} - \frac1\beta p_{txx} - \frac{d}\beta p_{xx}+\delta\psi_{tx}=0.
\end{cases}
$$
In \cite{MS}, the system is shown to be exponentially stable if $\chi=0$.
Again, the partial parabolicity of the model prevails, so that the exponential stability condition
is the same as in the Fourier case. Though, the constitutive law \eqref{TIII} cannot be deduced from
\eqref{GPlaw}, not even as a limiting case. Possibly, this feature may reflect
the fact that the theory
of heat conduction of type III seems to be at the limit
of thermodynamic admissibility (see the analysis of \cite{GGP}).

\section{Functional Setting and Notation}
\label{FSN}

\subsection{Assumptions on the memory kernel}
Calling
$$\mu(s)=-g'(s),$$
the {\it prime} denoting the derivative with respect to $s$,
let the following conditions hold.
\begin{itemize}
\smallskip
\item[(i)] $\mu$ is a nonnegative nonincreasing absolutely continuous function on $\R^+$
such that
$$\mu(0)=\lim_{s\to 0}\mu(s)\in (0,\infty).$$
\item[(ii)] There exists $\nu>0$ such that the differential inequality
$$
\mu'(s) + \nu \mu(s) \leq 0
$$
holds for almost every $s>0$.
\end{itemize}
\smallskip

\begin{remark}
For every $k>0$, the exponential kernel
$g(s)=k \e^{-ks}$ meets the hypotheses (i)-(ii).
\end{remark}

In particular, $\mu$ is summable on $\R^+$ with
$$\int_0^\infty \mu(s)\,\d s=g(0).
$$
Besides, the requirement that $g$ has total mass 1 translates into
$$
\int_0^\infty s\mu(s)\,\d s=1.
$$

\subsection{Functional spaces}
In what follows, $\langle\cdot,\cdot\rangle$ and $\|\cdot\|$ are the
standard inner product and norm on the Hilbert space $L^2({\mathfrak I})$.
We introduce the Hilbert subspace
$$
L^2_*({\mathfrak I})=\bigg\{ f\in L^2({\mathfrak I}) : \int_0^\ell f(x)\, \d x = 0\bigg\}
$$
of zero-mean functions, along with
the Hilbert spaces
$$H_0^1({\mathfrak I})\qquad\text{and}\qquad H^1_*({\mathfrak I}) = H^1({\mathfrak I})\cap L^2_*({\mathfrak I}),
$$
both endowed with the gradient norm, due to the Poincar\'e inequality.
We also consider the space $H^2({\mathfrak I})$ and
so-called memory space
$$
\M = L^2(\R^+; H_0^1({\mathfrak I}))
$$
of square summable $H_0^1$-valued functions on $\R^+$ with respect to the measure $\mu(s)\d s$,
endowed with the inner product
$$\langle\eta,\xi\rangle_\M=\int_0^\infty \mu(s)\langle\eta_x(s),\xi_x(s)\rangle\,\d s.
$$
The infinitesimal generator of the right-translation semigroup on $\M$ is
the linear operator
$$T\eta=-D\eta$$
with domain
$$\D(T)=\Big\{\eta\in{\M}:D\eta\in\M,\,\,
\lim_{s\to 0}\|\eta_x(s)\|=0\Big\},$$
where $D$ stands for weak derivative with respect to the internal variable $s\in\R^+$.
The phase space of our problem will be
$$
\H = H_0^1({\mathfrak I})\times L^2({\mathfrak I})\times H^1_*({\mathfrak I}) \times L^2_*({\mathfrak I}) \times L^2({\mathfrak I}) \times \M
$$
normed by
$$
\|(\varphi,\tilde\varphi,\psi,\tilde\psi,\theta,\eta)\|_\H^2
= \kappa\|\varphi_x+\psi\|^2 +\rho_1\|\tilde\varphi\|^2
+b\|\psi_x\|^2+\rho_2\|\tilde\psi\|^2 + \rho_3\|\theta\|^2 + \frac1\beta\|\eta\|^2_\M.
$$

\subsection{Basic facts on the memory space}
For every $\eta\in\D(T)$, the nonnegative functional
$$
\Gamma [\eta]=
-\int_0^{\infty}
\mu'(s)\|\eta_x(s)\|^2\,\d s
$$
is well defined, and the following identity holds (see \cite{Terreni})
\begin{equation}
\label{TTT}
2\l T \eta,\eta\r_{\M}
=-\Gamma[\eta].
\end{equation}
Moreover, in light of assumption (ii) on $\mu$,
we deduce the inequality
\begin{equation}
\label{NormaETA}
\nu\|\eta\|_{\M}^2\leq\Gamma [\eta],
\end{equation}
which will be crucial for our purposes.

\section{The Contraction Semigroup}
\label{Semigroup}

\noindent
Firstly, we introduce the auxiliary variable
$$\eta=\eta^t(x,s): \,(x,t,s)\in {\mathfrak I}\times [0,\infty) \times \R^+ \mapsto \R,$$
accounting for the integrated past history of $\theta$
and formally
defined as (see \cite{DAF,Terreni})
$$
\eta^t(x,s)=\int_{0}^s \theta(x,t-\sigma)\,\d\sigma,
$$
thus satisfying the Dirichlet boundary condition
$$
\eta^t(0,s)=\eta^t(\ell,s)=0
$$
and the further ``boundary condition"
$$\lim_{s\to 0}\eta^t(x,s)=0.$$
Hence, $\eta$ satisfies the equation
$$\eta^t_t=-\eta^t_s+\theta(t).
$$
The way to render the argument rigorous is
recasting \eqref{sys} in the history space
framework devised by C.M.\ Dafermos \cite{DAF}. This
amounts to considering
the partial differential system in the unknowns $\varphi=\varphi(t)$, $\psi=\psi(t)$,
$\theta=\theta(t)$ and $\eta=\eta^t$
\begin{align}
\label{Tuno}
&\rho_1 \varphi_{tt} -\kappa(\varphi_x +\psi)_x = 0,\\
\noalign{\vskip2,5mm}
\label{Tdue}
&\rho_2 \psi_{tt} -b\psi_{xx} +\kappa(\varphi_x +\psi) +\delta\theta_x= 0,\\
\noalign{\vskip1mm}
\displaystyle
\label{Ttre}
&\rho_3 \theta_t  - \frac{1}{\beta}\int_0^\infty \mu(s)\eta_{xx}(s)\,\d s + \delta\psi_{tx}=0,\\
\noalign{\vskip1mm}
\label{Tquattro}
&\eta_t = T\eta + \theta.
\end{align}

\begin{remark}
The analogy with the original system~\eqref{sys} is not merely  formal,
and can be made rigorous within the
proper functional setting (see \cite{Terreni} for more details).
\end{remark}

Introducing the state vector
$$z(t)=(\varphi(t),\tilde\varphi(t),\psi(t),\tilde\psi(t),\theta(t),\eta^t),$$
we view \eqref{Tuno}-\eqref{Tquattro} as the ODE in $\H$
\begin{equation}
\label{EQLIN}
\ddt z(t)=\A z(t),
\end{equation}
where the linear operator $\A$ is defined as
$$
\A
\left(\begin{matrix}
\varphi\\
\tilde\varphi\\
\psi\\
\tilde\psi\\
\theta\\
\eta
\end{matrix}
\right)
=\left(
\begin{matrix}
\tilde\varphi\\
\frac{\kappa}{\rho_1} (\varphi_x + \psi)_x\\
\tilde\psi\\
\frac{b}{\rho_2}\psi_{xx} - \frac{\kappa}{\rho_2}(\varphi_x+\psi) - \frac{\delta}{\rho_2}\theta_x\\
\noalign{\vskip1mm}
\frac{1}{\beta\rho_3}\int_0^\infty \mu(s) \eta_{xx}(s)\,\d s - \frac{\delta}{\rho_3}\tilde\psi_x\\
\noalign{\vskip.5mm}
T\eta + \theta
\end{matrix}
\right)
$$
with domain
$$
\D(\A) =\left\{ (\varphi,\tilde\varphi,\psi,\tilde\psi,\theta,\eta)\in\H \left|\,\,
\begin{matrix}
\varphi\in H^2({\mathfrak I})\\
\tilde\varphi\in H_0^1({\mathfrak I})\\
\psi_x\in H_0^1({\mathfrak I})\\
\tilde\psi \in H_*^1({\mathfrak I})\\
\theta \in H_0^1({\mathfrak I})\\
\eta \in \D(T)\\
\int_0^\infty \mu(s)\eta(s)\, \d s \in H^2({\mathfrak I})\\
\end{matrix}\right.
\right\}.
$$

\begin{theorem}
\label{Sgeneration}
The operator $\A$ is the infinitesimal generator of
a contraction semigroup
$$
S(t) = \e^{t\A}:\H\to\H.
$$
\end{theorem}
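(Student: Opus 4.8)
The plan is to invoke the Lumer--Phillips theorem: I will verify that $\A$ is a densely defined dissipative operator on $\H$ for which $I-\A$ is onto. Density of $\D(\A)$ in $\H$ is routine, since the domain contains a dense set of sufficiently regular elements in each component, so I will concentrate on dissipativity and surjectivity.

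First I would check dissipativity. Fixing $z=(\varphi,\tilde\varphi,\psi,\tilde\psi,\theta,\eta)\in\D(\A)$, I compute $\langle\A z,z\rangle_\H$ directly from the definition of the norm. Integrating by parts in $x$ and exploiting the boundary information encoded in the domain (namely $\tilde\varphi,\theta\in H_0^1$, $\psi_x\in H_0^1$, and $\eta(s)\in H_0^1$ for a.e.\ $s$), all the mechanical terms carrying $\kappa$ and $b$ cancel in pairs, the two $\delta$-coupling terms cancel against one another, and likewise the two terms coupling $\theta$ with the memory variable cancel. What survives is only the contribution of the translation generator $T$, so that by the identity \eqref{TTT},
$$
\langle\A z,z\rangle_\H=\frac1\beta\langle T\eta,\eta\rangle_\M=-\frac{1}{2\beta}\Gamma[\eta]\leq 0,
$$
which is exactly dissipativity.

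The substantial work is the surjectivity of $I-\A$. Given $f=(f_1,\dots,f_6)\in\H$, I solve $z-\A z=f$. The first and third scalar equations yield $\tilde\varphi=\varphi-f_1$ and $\tilde\psi=\psi-f_3$ algebraically, while the last equation reads $\eta+D\eta=\theta+f_6$, a linear ODE in the memory variable $s$ subject to $\lim_{s\to0}\|\eta_x(s)\|=0$; it is solved explicitly by
$$
\eta(s)=\int_0^s \e^{-(s-\sigma)}\big(\theta+f_6(\sigma)\big)\,\d\sigma,
$$
which expresses $\eta$, and hence $\int_0^\infty\mu(s)\eta_{xx}(s)\,\d s=c_0\,\theta_{xx}+(\text{data})$ with $c_0=\int_0^\infty\mu(s)(1-\e^{-s})\,\d s>0$, in terms of $\theta$. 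Substituting, the resolvent problem collapses onto the coupled elliptic system
$$
\begin{cases}
\rho_1\varphi-\kappa(\varphi_x+\psi)_x=F_1,\\
\rho_2\psi-b\psi_{xx}+\kappa(\varphi_x+\psi)+\delta\theta_x=F_2,\\
\rho_3\theta-\frac{c_0}{\beta}\theta_{xx}+\delta\psi_x=F_3,
\end{cases}
$$
posed for $(\varphi,\psi,\theta)\in H_0^1\times H^1_*\times H_0^1$ with right-hand sides $F_i$ assembled from the data. I attack this variationally: testing the system against $(\varphi,\psi,\theta)$ itself and integrating by parts, the skew coupling $\delta\big(\langle\theta_x,\psi\rangle+\langle\psi_x,\theta\rangle\big)$ vanishes thanks to the Dirichlet condition on $\theta$, leaving the quadratic form
$$
\rho_1\|\varphi\|^2+\kappa\|\varphi_x+\psi\|^2+\rho_2\|\psi\|^2+b\|\psi_x\|^2+\rho_3\|\theta\|^2+\frac{c_0}{\beta}\|\theta_x\|^2,
$$
which, together with the Poincar\'e inequality, shows that the associated bilinear form is coercive (and trivially bounded). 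Lax--Milgram then provides a unique weak solution.

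Finally I would upgrade regularity to guarantee $z\in\D(\A)$: the elliptic equations give $\varphi\in H^2$, $\theta\in H_0^1$, and $\int_0^\infty\mu(s)\eta(s)\,\d s\in H^2$, while the explicit formula for $\eta$ combined with $D\eta=\theta+f_6-\eta\in\M$ places $\eta$ in $\D(T)$. I expect this last bookkeeping to be the delicate point, i.e.\ confirming that the $\eta$ produced by the transport ODE genuinely lies in $\M\cap\D(T)$ and that the functional generated by the $f_6$-part defines a bounded linear form on $H_0^1$; here the summability of $\mu$ and the decay estimate \eqref{NormaETA} are the essential ingredients. With dissipativity, density, and surjectivity of $I-\A$ established, the Lumer--Phillips theorem delivers the contraction semigroup $S(t)=\e^{t\A}$.
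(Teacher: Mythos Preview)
Your argument is correct and follows exactly the route the paper indicates: the paper itself does not give a proof of Theorem~\ref{Sgeneration} but states that it ``is based on the classical Lumer--Phillips theorem'' and refers to~\cite{butanino} for a template. Your dissipativity computation, the reduction of the resolvent equation to a coercive elliptic system via the explicit transport formula for $\eta$, and the subsequent regularity bootstrap are precisely the standard implementation of that strategy in the past-history setting, so there is nothing to contrast.
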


The proof of this fact is based on
the classical Lumer-Phillips theorem \cite{PAZ}, and is here omitted
(see however \cite{butanino} for an application of the Lumer-Phillips theorem to equations
with memory in the past history framework).
Thus, for every initial datum
$$z_0=(\varphi_0,\tilde\varphi_0,\psi_0,\tilde\psi_0,\theta_0,\eta_0)\in\H$$
given at time $t=0$,
the unique solution at time $t>0$ to \eqref{EQLIN} reads
$$z(t)=(\varphi(t),\varphi_t(t),\psi(t),\psi_t(t),\theta(t),\eta^t)=S(t)z_0.$$
Besides, $\eta^t$ fulfills the explicit representation
formula (see~\cite{Terreni})
$$
\eta^t(s)=\displaystyle
\begin{cases}
\int_0^s \theta(t-\sigma)\,\d \sigma & s\leq t,\\
\eta_0(s-t)+\int_0^t \theta(t-\sigma)\,\d \sigma & s>t.
\end{cases}
$$

\begin{remark}
As observed in \cite{RR}, the choice of the spaces of zero-mean functions
for the variable
$\psi$ and its derivative is consistent. Indeed, calling
$$\Theta(t)=\int_0^\ell \psi(x,t)\,\d x$$
and integrating \eqref{Tdue} on ${\mathfrak I}$ we obtain the differential equation
$$\rho_2 \ddot \Theta(t)+\kappa \Theta(t)=0.$$
Hence, if $\Theta(0)=\dot\Theta(0)=0$ it follows that $\Theta(t)\equiv 0$.
\end{remark}

\begin{remark}
For the existence of the contraction semigroup $S(t)$
the hypotheses (i)-(ii) on the kernel are overabundant.
It is actually enough to require that
$\mu$ be a (nonnull and nonnegative) nonincreasing absolutely continuous
summable function on $\R^+$, possibly unbounded in a neighborhood of zero.
\end{remark}

For any fixed initial datum $z_0\in\H$, we define
(twice) the energy as
$$
E(t)=\|S(t)z_0\|_\H^2.
$$
The natural multiplication of equation \eqref{EQLIN} by $z(t)$ in the weak energy space,
along with an exploitation of~\eqref{TTT}, provide the energy identity
\begin{equation}
\label{Eidentity}
\ddt E(t) =2\langle \A z(t),z(t)\rangle_\H
=\frac{2}{\beta}\langle T \eta^t,\eta^t\rangle_\M=-\frac{1}{\beta} \Gamma[\eta^t],
\end{equation}
valid for all $z_0\in\D(\A)$.

\smallskip
As anticipated in the Introduction, the main Theorem~\ref{MAIN}
of this paper tells that
$$\text{$S(t)$ exp.\ stable}\quad\Leftrightarrow\quad\chi_g=0.$$
The proof of the result is carried out in the next Sections~\ref{Aux}-\ref{Nec}.

\begin{remark}
We mention that an alternative approach is also possible.
Namely, to set the problem in the so-called minimal state framework \cite{FGP},
rather than in the past history one.
In which case, the necessary and sufficient condition $\chi_g=0$ of exponential decay
remains the same. In fact, one can show in general that
the exponential decay in the history space (i.e.\ what proved here) implies
the analogous decay in the minimal state space (cf.\ \cite{comarpa,FGP}).
\end{remark}

\section{Some Auxiliary Functionals}
\label{Aux}

\noindent
In this section, we define some auxiliary functionals needed in the proof of the sufficiency
part of Theorem~\ref{MAIN}. As customary, it is understood that we work with
(regular) solutions arising from
initial data belonging to the domain of the operator $\A$. Along the section, $C\geq0$ will denote a {\it generic}
constant depending only on the structural quantities of the problem.
Besides, we will tacitly use several times the H\"older, Young and Poincar\'e inequalities.
In particular we will exploit the inequality
$$\int_0^\infty \mu(s) \|\eta_x(s)\|\,\d s\leq\bigg(\int_0^\infty \mu(s)\,\d s\bigg)^\frac12
\bigg(\int_0^\infty \mu(s) \|\eta_x(s)\|^2\,\d s\bigg)^\frac12
=\sqrt{g(0)}\,\|\eta\|_\M.
$$

\subsection{The functional $\boldsymbol{I}$}
Let
$$
I(t) = -\frac{2\rho_3}{g(0)}\int_0^\infty \mu(s) \l \theta(t),\eta^t(s) \r\,\d s.
$$

\begin{lemma}
\label{Li}
For every $\eps_I>0$ small, $I$ satisfies the differential inequality
$$
\ddt I + \rho_3\|\theta\|^2 +\|\eta\|^2_\M\leq \eps_I\|\psi_t\|^2
+ \frac{c_I}{\eps_I} \Gamma[\eta]
$$
for some $c_I>0$ independent of $\eps_I$.
\end{lemma}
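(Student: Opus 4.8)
The plan is to differentiate $I$ in time, substitute the evolution equations \eqref{Ttre} and \eqref{Tquattro} for $\theta_t$ and $\eta_t$, and then estimate the resulting pieces under the guiding principle that \emph{every} contribution involving the memory variable must be dominated by the dissipation $\Gamma[\eta]$ rather than merely by $\|\eta\|_\M$ --- this is what makes the estimate useful later, and inequality \eqref{NormaETA} is precisely the tool that permits it. Writing $I=-\frac{2\rho_3}{g(0)}\int_0^\infty\mu(s)\l\theta,\eta(s)\r\,\d s$ and applying the product rule, I would split $\ddt I$ into the piece coming from $\l\theta_t,\eta\r$ and the piece coming from $\l\theta,\eta_t\r$.

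For the second piece I substitute $\eta_t=T\eta+\theta=-D\eta+\theta$ from \eqref{Tquattro}. The contribution of $\theta$ yields $-\frac{2\rho_3}{g(0)}\|\theta\|^2\int_0^\infty\mu(s)\,\d s=-2\rho_3\|\theta\|^2$, since $\int_0^\infty\mu(s)\,\d s=g(0)$; this is the main gain. The contribution of $-D\eta$ I integrate by parts in $s$: the boundary term at $s=0$ vanishes because $\eta(0)=0$, the one at $s=\infty$ vanishes for regular solutions, and what remains is $\frac{2\rho_3}{g(0)}\int_0^\infty(-\mu'(s))\l\theta,\eta(s)\r\,\d s$. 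Using Poincar\'e ($\|\eta(s)\|\leq C\|\eta_x(s)\|$), the Cauchy--Schwarz inequality against the weight $-\mu'(s)$ together with $\int_0^\infty(-\mu'(s))\,\d s=\mu(0)$, and Young's inequality, this is bounded by $\rho_3\|\theta\|^2+C\Gamma[\eta]$. Hence the two $\theta$-contributions combine to at most $-\rho_3\|\theta\|^2+C\Gamma[\eta]$.

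For the first piece I use $\rho_3\theta_t=\frac1\beta\int_0^\infty\mu(s)\eta_{xx}(s)\,\d s-\delta\psi_{tx}$ from \eqref{Ttre}. The memory term, after integrating by parts twice in $x$ (the Dirichlet conditions killing the boundary terms), equals the nonnegative quantity $\frac{2}{\beta g(0)}\|\int_0^\infty\mu(s)\eta_x(s)\,\d s\|^2$; by Cauchy--Schwarz this is $\leq\frac2\beta\|\eta\|_\M^2$, which I then dominate by $C\Gamma[\eta]$ through \eqref{NormaETA}. The term in $\psi_{tx}$, after one integration by parts in $x$, equals $-\frac{2\delta}{g(0)}\l\psi_t,\int_0^\infty\mu(s)\eta_x(s)\,\d s\r$; Cauchy--Schwarz and Young give $\eps_I\|\psi_t\|^2+\frac{C}{\eps_I}\|\eta\|_\M^2$, and again \eqref{NormaETA} turns the last term into $\frac{C}{\eps_I}\Gamma[\eta]$.

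Collecting the estimates, I obtain $\ddt I\leq-\rho_3\|\theta\|^2+\eps_I\|\psi_t\|^2+\frac{c_I}{\eps_I}\Gamma[\eta]$. It then remains only to transfer the extra $\|\eta\|_\M^2$ onto the left-hand side: since $\|\eta\|_\M^2\leq\frac1\nu\Gamma[\eta]$ by \eqref{NormaETA}, adding it costs merely another multiple of $\Gamma[\eta]$, absorbed into $\frac{c_I}{\eps_I}\Gamma[\eta]$ after relabelling the constant. The delicate points are not analytical but bookkeeping: keeping the sign so that the dominant $-2\rho_3\|\theta\|^2$ is only partially consumed, and --- crucially --- refusing to stop at a bound in $\|\eta\|_\M$ for the two memory cross-terms, pushing instead all the way to $\Gamma[\eta]$ via \eqref{NormaETA}, which is what will feed the decay argument of Section~\ref{Suf}. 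Differentiation under the integral sign and the vanishing of the $s=\infty$ boundary term are routine for data in $\D(\A)$.
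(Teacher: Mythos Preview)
Your argument is correct and follows essentially the same route as the paper: differentiate $I$, substitute \eqref{Ttre} and \eqref{Tquattro}, integrate the $T\eta$ contribution by parts in $s$ and bound it via $\sqrt{\Gamma[\eta]}$, control the remaining memory terms by $\|\eta\|_\M^2$, and then invoke \eqref{NormaETA}. The only cosmetic differences are that the paper adds $\|\eta\|_\M^2$ to both sides at the outset rather than at the end, and that a single integration by parts in $x$ (not two) is what turns $\langle\int\mu\,\eta_{xx},\int\mu\,\eta\rangle$ into $-\|\int\mu\,\eta_x\|^2$.
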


\begin{proof}
In light of \eqref{Ttre} and \eqref{Tquattro}, we have the identity
\begin{align*}
\ddt I + 2\rho_3\|\theta\|^2 +\|\eta\|^2_\M= &-\frac{2\rho_3}{g(0)}
\int_0^\infty \mu(s) \l T\eta(s),\theta \r\,\d s + \frac{2}{g(0)\beta}\bigg\|
\int_0^\infty \mu(s)\eta_x(s)\,\d s\bigg\|^2\\
&- \frac{2\delta}{g(0)}\int_0^\infty \mu(s) \l \eta_x(s),\psi_t \r\,\d s+\|\eta\|^2_\M.
\end{align*}
Integrating by parts in $s$, we infer that
(as shown in~\cite{Terreni}, the boundary terms vanish)
\begin{align*}
-\frac{2\rho_3}{g(0)}\int_0^\infty \mu(s) \l T\eta(s),\theta \r\,\d s &=
-\frac{2\rho_3}{g(0)}\int_0^\infty \mu'(s) \l \eta(s),\theta \r\,\d s \\
&\leq C \|\theta\|\sqrt{\Gamma[\eta]}\\
\noalign{\vskip1mm}
&\leq \rho_3 \|\theta\|^2 + C\Gamma[\eta].
\end{align*}
Thus, exploiting the inequalities
$$
\frac{2}{g(0)\beta}\bigg\|\int_0^\infty \mu(s)\eta_x(s)\,\d s\bigg\|^2 \leq C \|\eta\|_\M^2
$$
and
$$
- \frac{2\delta}{g(0)}\int_0^\infty \mu(s) \l \eta_x(s),\psi_t \r\,\d s\leq C \|\psi_t\|\|\eta\|_\M,
$$
appealing to \eqref{NormaETA} we obtain for every $\eps_I>0$ small the estimate
$$
\ddt I + \rho_3 \|\theta\|^2 +\|\eta\|^2_\M\leq C \|\psi_t\|\Gamma[\eta] + C\Gamma[\eta]
\leq \eps_I\|\psi_t\|^2 + \frac{C}{\eps_I} \Gamma[\eta],
$$
where $C$ is independent of $\eps_I$.
\end{proof}

\subsection{The functional $\boldsymbol{J}$}
Defining the primitive\footnote{In particular, $\Psi\in H_0^1({\mathfrak I})$.}
$$
\Psi(x,t) = \int_0^x \psi(y,t)\,\d y,
$$
let
$$
J(t) = -\frac{2\rho_2\rho_3}{\delta} \l \theta(t), \Psi_t(t)\r.
$$

\begin{lemma}
\label{Lgei}
For every $\eps_J>0$ small, $J$ satisfies the differential inequality
$$
\ddt J + \rho_2\|\psi_t\|^2 \leq \eps_J \big[
 \|\psi_x\|^2 + \|\varphi_x + \psi\|^2 \big] + \frac{c_J}{\eps_J}\big[\|\theta\|^2+ \Gamma[\eta]\big]
$$
for some $c_J>0$ independent of $\eps_J$.
\end{lemma}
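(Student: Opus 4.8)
The plan is to differentiate $J$ by the product rule, $\ddt J = -\frac{2\rho_2\rho_3}{\delta}\big[\l\theta_t,\Psi_t\r + \l\theta,\Psi_{tt}\r\big]$, and to feed the equations \eqref{Tdue}--\eqref{Tquattro} into the two summands. For the second one, the crucial preliminary computation is to integrate \eqref{Tdue} over $[0,x]$: invoking the boundary conditions $\psi_x(0)=0$, $\varphi(0)=0$, $\theta(0)=0$, one obtains the clean identity $\rho_2\Psi_{tt}=b\psi_x-\kappa\varphi-\kappa\Psi-\delta\theta$, where $\Psi_x=\psi$ and $\Psi_{tx}=\psi_t$. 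Observe that $\Psi\in H_0^1(\I)$ precisely because $\psi\in H_*^1(\I)$ has zero mean (so $\Psi(\ell)=0$), and likewise $\psi_t\in H_*^1(\I)$ forces $\Psi_t(0)=\Psi_t(\ell)=0$; these facts guarantee that all boundary terms in the subsequent integrations by parts in $x$ vanish.

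The key cancellation arises in the first summand. Substituting $\rho_3\theta_t$ from \eqref{Ttre} and integrating by parts in the spatial variable, the heat-flux contribution $\l\psi_{tx},\Psi_t\r$ collapses to $-\|\psi_t\|^2$, which after multiplication by the prefactor produces the sought dissipative term $-2\rho_2\|\psi_t\|^2$. Integrating the memory integral by parts twice in $x$ (each time killing the boundary term through $\Psi_t(0)=\Psi_t(\ell)=0$) turns it into the cross term $\frac{2\rho_2}{\delta\beta}\l\int_0^\infty \mu(s)\eta_x(s)\,\d s,\psi_t\r$. Collecting everything, I expect to reach
$$
\ddt J + \rho_2\|\psi_t\|^2 = -\rho_2\|\psi_t\|^2
+\frac{2\rho_2}{\delta\beta}\l{\tx\int_0^\infty}\mu(s)\eta_x(s)\,\d s,\psi_t\r
-\frac{2\rho_3 b}{\delta}\l\theta,\psi_x\r
+\frac{2\rho_3\kappa}{\delta}\l\theta,\varphi\r
+\frac{2\rho_3\kappa}{\delta}\l\theta,\Psi\r
+2\rho_3\|\theta\|^2 .
$$

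The final step is to absorb the remainders. The memory cross term is handled via $\int_0^\infty \mu(s)\|\eta_x(s)\|\,\d s\le\sqrt{g(0)}\,\|\eta\|_\M$ followed by Young's inequality, the constant chosen so that it consumes only half of the available $\rho_2\|\psi_t\|^2$, with the leftover $\|\eta\|_\M^2$ converted into $\Gamma[\eta]$ through \eqref{NormaETA}. For the three products carrying $\theta$, I would use Poincaré in the forms $\|\varphi\|\le C\|\varphi_x\|\le C\big(\|\varphi_x+\psi\|+\|\psi_x\|\big)$ and $\|\Psi\|\le C\|\psi\|\le C\|\psi_x\|$, and then split each product by Young, sending the mechanical factors $\|\psi_x\|^2$ and $\|\varphi_x+\psi\|^2$ into the $\eps_J$-weighted bracket and the thermal factors into $\frac{c_J}{\eps_J}\|\theta\|^2$; the bare term $2\rho_3\|\theta\|^2$ is likewise dominated by $\frac{c_J}{\eps_J}\|\theta\|^2$. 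Discarding the surviving negative multiple of $\|\psi_t\|^2$ yields the stated inequality. The one place demanding care — and the natural source of sign errors — is the double integration by parts in $x$ combined with the systematic elimination of boundary terms via the boundary conditions; once the identity for $\rho_2\Psi_{tt}$ is correctly established, the rest is a routine application of Hölder, Young and Poincaré.
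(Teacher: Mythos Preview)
Your argument is correct and follows essentially the same route as the paper: both differentiate $J$, substitute \eqref{Tdue}--\eqref{Ttre}, arrive at the identical identity, and then close with H\"older--Young--Poincar\'e together with \eqref{NormaETA}. Two cosmetic remarks: only \emph{one} integration by parts in $x$ is needed to turn $\l\eta_{xx},\Psi_t\r$ into $-\l\eta_x,\psi_t\r$ (since $\Psi_{tx}=\psi_t$); and the paper handles the pair $\l\theta,\varphi\r+\l\theta,\Psi\r$ in one stroke via $\|\varphi+\Psi\|\le C\|(\varphi+\Psi)_x\|=C\|\varphi_x+\psi\|$, which avoids the detour through $\|\varphi_x\|\le\|\varphi_x+\psi\|+\|\psi\|$, but your split works equally well.
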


\begin{proof}
By means of \eqref{Tdue} and \eqref{Ttre}, we get
\begin{align*}
&\ddt J + 2 \rho_2 \|\psi_t\|^2\\
&= \frac{2 \rho_2}{\beta\delta}\int_0^\infty \mu(s) \l \eta_x(s),\psi_t\r\,\d s
- \frac{2 \rho_3 b}{\delta} \l\theta,\psi_x\r + \frac{2 \rho_3\kappa}{\delta}\l \theta,\varphi + \Psi \r
+ 2\rho_3\|\theta\|^2.
\end{align*}
Estimating the terms in the right-hand side as (here we use again \eqref{NormaETA})
$$
\frac{2 \rho_2}{\beta\delta}\int_0^\infty \mu(s) \l \eta_x(s),\psi_t\r\,\d s \leq C\|\psi_t\|\|\eta\|_\M
\leq \rho_2\|\psi_t\|^2 + C\Gamma[\eta]
$$
and, for every $\eps_J>0$ small,
\begin{align*}
&- \frac{2 \rho_3 b}{\delta} \l\theta,\psi_x\r + \frac{2 \rho_3\kappa}{\delta}\l \theta,\varphi + \Psi \r
+ 2\rho_3\|\theta\|^2 \\
&\leq C\big[\|\psi_x\| + \|\varphi_x+\psi\| \big]\|\theta\| + C\|\theta\|^2\\
&\leq  \eps_J \big[ \|\psi_x\|^2 + \|\varphi_x + \psi\|^2 \big] + \frac{C}{\eps_J}\|\theta\|^2,
\end{align*}
with $C$ independent of $\eps_J$, the claim follows.
\end{proof}

\subsection{The functional $\boldsymbol{K}$}
We introduce the number
$$
\gamma_g = \kappa - \frac{g(0)\rho_1}{\beta\rho_3}
$$
depending on the memory kernel $g$.
It is readily seen that
$$
\chi_g = 0 \imply \gamma_g \not=0.
$$
Then, assuming $\chi_g = 0$ and calling
\begin{align*}
K_1(t) &=  \frac{\rho_1 b}{\kappa}\l \psi_x(t), \varphi_t(t) \r + \rho_2 \l \psi_t(t), \varphi_x(t)
+ \psi(t) \r,\\
K_2(t) &=  \int_0^\infty \mu(s) \l \eta_x^t (s), \varphi_x(t) + \psi(t) \r\, \d s,\\
K_3(t) &= - \frac{\delta\rho_1}{\kappa} \l \theta(t), \varphi_t(t) \r,
\end{align*}
we set
$$
K(t) =  \frac{2\kappa}{\gamma_g} \bigg[\frac{\gamma_g}{\kappa} K_1(t)
- \frac{\rho_1\delta}{\beta\rho_3\kappa} K_2(t)
+ K_3(t) \bigg].
$$

\begin{lemma}
\label{Lkappa}
Suppose that $\chi_g=0$. Then $K$ satisfies the differential inequality
$$
\ddt K + \kappa\|\varphi_x+\psi\|^2 \leq c_K \big[\|\psi_t\|^2
+ \Gamma[\eta] \big]
$$
for some $c_K>0$.
\end{lemma}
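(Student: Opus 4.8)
The plan is to differentiate $K=2K_1-\frac{2\rho_1\delta}{\beta\rho_3\gamma_g}K_2+\frac{2\kappa}{\gamma_g}K_3$ along the trajectories of \eqref{Tuno}--\eqref{Tquattro}, handling the three pieces separately and reducing every product by integration by parts in $x$ (discarding boundary terms thanks to $\varphi_t=\theta=0$ and $\psi_x=0$ at the endpoints) and in $s$ (using $\mu(\infty)=0$ and $\lim_{s\to0}\|\eta_x(s)\|=0$). First I would compute $\ddt K_1$: substituting $\rho_1\varphi_{tt}=\kappa(\varphi_x+\psi)_x$ and $\rho_2\psi_{tt}=b\psi_{xx}-\kappa(\varphi_x+\psi)-\delta\theta_x$, the two contributions $b\l\psi_{xx},\varphi_x+\psi\r$ cancel, the two mixed terms combine with coefficient $\rho_2-\frac{\rho_1 b}{\kappa}=-b\chi$, and one is left with
$$
\ddt K_1=-\kappa\|\varphi_x+\psi\|^2+\rho_2\|\psi_t\|^2-b\chi\l\psi_t,\varphi_{tx}\r-\delta\l\theta_x,\varphi_x+\psi\r .
$$
This already yields the good term $-\kappa\|\varphi_x+\psi\|^2$, but leaves behind three quantities not controlled by $\|\psi_t\|^2$ or $\Gamma[\eta]$: the term $\l\psi_t,\varphi_{tx}\r$, the term $\l\theta_x,\varphi_x+\psi\r$, and (as will emerge below) $\l\int_0^\infty\mu(s)\eta_x(s)\,\d s,\varphi_{tx}\r$.

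Next I would differentiate $K_3$, using $\rho_1\varphi_{tt}=\kappa(\varphi_x+\psi)_x$ on $\l\theta,\varphi_{tt}\r$ and equation \eqref{Ttre} on $\theta_t$; after integration by parts this produces $\delta\l\theta_x,\varphi_x+\psi\r$, a multiple of $\l\int_0^\infty\mu(s)\eta_x(s)\,\d s,\varphi_{tx}\r$, and $-\frac{\delta^2\rho_1}{\kappa\rho_3}\l\psi_t,\varphi_{tx}\r$. Finally $\ddt K_2$, via $\eta_t=-\eta_s+\theta$ and integration by parts in $s$, produces $g(0)\l\theta_x,\varphi_x+\psi\r$, the matching multiple of $\l\int_0^\infty\mu(s)\eta_x(s)\,\d s,\varphi_{tx}\r$, the harmless pairing $\l\int_0^\infty\mu(s)\eta_x(s)\,\d s,\psi_t\r$, and $\int_0^\infty\mu'(s)\l\eta_x(s),\varphi_x+\psi\r\,\d s$. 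With the chosen weights the two occurrences of $\l\int_0^\infty\mu(s)\eta_x(s)\,\d s,\varphi_{tx}\r$ cancel identically, and the three occurrences of $\l\theta_x,\varphi_x+\psi\r$ cancel because $\frac{\kappa}{\gamma_g}-\frac{\rho_1 g(0)}{\beta\rho_3\gamma_g}=1$, that is, exactly because $\gamma_g=\kappa-\frac{g(0)\rho_1}{\beta\rho_3}$. This is the bookkeeping reason behind the precise coefficient $-\frac{\rho_1\delta}{\beta\rho_3\kappa}$ in front of $K_2$.

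The decisive step, where the hypothesis enters, is the coefficient of the surviving mixed term $\l\psi_t,\varphi_{tx}\r$, which totals $-2b\chi-\frac{2\delta^2\rho_1}{\gamma_g\rho_3}$. Writing $\frac{\rho_1}{\rho_3\kappa}-\frac{\beta}{g(0)}=-\frac{\beta}{g(0)\kappa}\gamma_g$, one checks that $\chi_g=0$ is equivalent to $\gamma_g\chi=-\frac{\rho_1\delta^2}{\rho_3 b}$, i.e.\ to $b\chi+\frac{\delta^2\rho_1}{\gamma_g\rho_3}=0$ (which in passing gives $\gamma_g\not=0$, consistent with $\chi_g=0\imply\gamma_g\not=0$); hence precisely under $\chi_g=0$ this last uncontrollable term vanishes. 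I expect this algebraic identity to be the main obstacle, both to discover and to verify, since it is the sole reason the construction closes and it is what dictates the very definition of $K$. After these cancellations only $-2\kappa\|\varphi_x+\psi\|^2$, $2\rho_2\|\psi_t\|^2$, the pairing $\l\int_0^\infty\mu(s)\eta_x(s)\,\d s,\psi_t\r$ and $\int_0^\infty\mu'(s)\l\eta_x(s),\varphi_x+\psi\r\,\d s$ remain. The last two I would estimate by Cauchy--Schwarz together with \eqref{NormaETA} and $\int_0^\infty(-\mu')\,\d s=\mu(0)<\infty$, obtaining a bound $\eps\|\varphi_x+\psi\|^2+\eps\|\psi_t\|^2+C_\eps\Gamma[\eta]$; choosing $\eps\leq\kappa$ to absorb $\eps\|\varphi_x+\psi\|^2$ into the spare $-\kappa\|\varphi_x+\psi\|^2$ then yields the asserted inequality.
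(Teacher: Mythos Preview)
Your proposal is correct and follows essentially the same route as the paper. The only cosmetic difference is where equation \eqref{Ttre} is invoked: the paper substitutes it inside $\ddt K_2$ (rewriting $\int_0^\infty\mu(s)\langle\eta_{xx}(s),\varphi_t\rangle\,\d s$ in terms of $\langle\theta_t,\varphi_t\rangle$ and $\langle\psi_t,\varphi_{tx}\rangle$, so that the $\langle\theta_t,\varphi_t\rangle$ contributions from $K_2$ and $K_3$ cancel), whereas you substitute it inside $\ddt K_3$ (expanding $\langle\theta_t,\varphi_t\rangle$ back into the memory integral and $\langle\psi_t,\varphi_{tx}\rangle$, so that the $\langle\int\mu\eta_x,\varphi_{tx}\rangle$ contributions from $K_2$ and $K_3$ cancel). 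These are the same substitution read in opposite directions, and the resulting coefficient $-2b\chi-\tfrac{2\rho_1\delta^2}{\gamma_g\rho_3}$ of $\langle\psi_t,\varphi_{tx}\rangle$, together with its vanishing under $\chi_g=0$, is identical to the paper's $\chi_g\tfrac{2g(0)\kappa b}{\beta\gamma_g}$.
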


\begin{proof}
In light of \eqref{Tuno} and \eqref{Tdue}, we obtain the identity
\begin{equation}
\label{A}
\ddt K_1 +\kappa\|\varphi_x + \psi\|^2
= \Big(\rho_2  - \frac{\rho_1 b}{\kappa} \Big)\l \psi_{t}, \varphi_{tx} \r
+ \rho_2\|\psi_t\|^2 - \delta \l \theta_x, \varphi_x + \psi \r.
\end{equation}
By \eqref{Tquattro},
\begin{align*}
\ddt K_2 &= -\int_0^\infty \mu(s) \l T\eta(s), (\varphi_x + \psi)_x \r\, \d s
- g(0) \l \theta , (\varphi_x + \psi)_x \r\\ \nonumber
 &\quad - \int_0^\infty \mu(s) \l \eta_{xx}(s) , \varphi_{t} \r\, \d s + \int_0^\infty \mu(s) \l \eta_x (s), \psi_t \r\, \d s .
\end{align*} From \eqref{Ttre} we learn that
$$
- \int_0^\infty \mu(s) \l \eta_{xx}(s) , \varphi_{t} \r\, \d s =
- \beta\rho_3\l \theta_t,\varphi_t \r + \beta \delta\l \psi_{t},\varphi_{tx} \r,
$$
while an integration by parts in $s$ yields
(again, the boundary terms vanish)
$$
-\int_0^\infty \mu(s) \l T\eta(s), (\varphi_x + \psi)_x \r\, \d s = \int_0^\infty \mu'(s) \l \eta_x(s), \varphi_x + \psi \r\, \d s.
$$
We conclude that
\begin{align}
\label{B}
\ddt K_2
& =  \int_0^\infty \mu'(s) \l \eta_x(s), \varphi_x + \psi \r\, \d s  + \int_0^\infty \mu(s) \l \eta_x (s), \psi_t \r\, \d s\\\noalign{\vskip1.7mm}
\nonumber &\quad - \beta\rho_3\l \theta_t,\varphi_t \r + \beta \delta\l \psi_{t},\varphi_{tx} \r + g(0) \l \theta_x , \varphi_x + \psi \r.
\end{align}
Finally, exploiting once more \eqref{Tuno},
\begin{equation}
\label{C}
\ddt K_3 = -\frac{\delta\rho_1}{\kappa}\l \theta_t,\varphi_t\r + \delta \l \theta_x,\varphi_x + \psi \r.
\end{equation}
At this point, reconstructing $K$ from \eqref{A}-\eqref{C}, we are led to the differential identity
\begin{align*}
&\ddt K + 2\kappa\|\varphi_x+\psi\|^2\\
& = \chi_g \frac{2g(0)\kappa b}{\beta\gamma_g}\l \psi_{t}, \varphi_{tx} \r + 2\rho_2\|\psi_t\|^2 \\
&\quad- \frac{2\rho_1 \delta}{\beta\gamma_g\rho_3}\bigg[ \int_0^\infty \mu'(s) \l \eta_x(s), \varphi_x + \psi \r\, \d s
+ \int_0^\infty \mu(s) \l \eta_x (s), \psi_t \r\, \d s \bigg].
\end{align*}
Since $\chi_g = 0$ by assumption, we are left to control the integral terms
in the right-hand side.
We have
\begin{align*}
- \frac{2\rho_1 \delta}{\beta\gamma_g\rho_3} \int_0^\infty \mu'(s) \l \eta_x(s), \varphi_x + \psi \r\, \d s
&\leq C\|\varphi_x + \psi\| \int_0^\infty -\mu'(s)\|\eta_x(s)\|\,\d s\\
&\leq C\|\varphi_x + \psi\|\sqrt{\Gamma[\eta]}\\
\noalign{\vskip1.5mm}
& \leq \kappa\|\varphi_x + \psi\|^2 + C\Gamma[\eta],
\end{align*}
and, recalling \eqref{NormaETA},
$$
- \frac{2\rho_1 \delta}{\beta\gamma_g\rho_3} \int_0^\infty \mu(s) \l \eta_x (s), \psi_t \r\, \d s
\leq C\|\psi_t\|\|\eta\|_\M \leq C \|\psi_t\|^2 + C\Gamma[\eta].
$$
The proof is completed.
\end{proof}

\subsection{The functional $\boldsymbol{L}$}
Let
$$
L(t) = 2\rho_2 \l \psi_t(t),\psi(t) \r -2\rho_1 \l \varphi_t(t),\varphi(t)\r.
$$

\begin{lemma}
\label{Lelle}
The functional $L$ satisfies the differential inequality
$$
\ddt L + \rho_1\|\varphi_t\|^2 + b\|\psi_x\|^2 \leq c_L\big[\|\varphi_x+\psi\|^2
+\|\psi_t\|^2 + \|\theta\|^2 \big]
$$
for some $c_L>0$.
\end{lemma}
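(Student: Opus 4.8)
The plan is to differentiate $L$ directly, insert the two mechanical equations \eqref{Tuno} and \eqref{Tdue}, and integrate by parts in $x$, observing that every boundary term vanishes thanks to the boundary conditions (Dirichlet for $\varphi$ and $\theta$, Neumann for $\psi$). First I would write
\[
\ddt L = 2\rho_2\|\psi_t\|^2 - 2\rho_1\|\varphi_t\|^2 + 2\rho_2\l \psi_{tt},\psi\r - 2\rho_1 \l \varphi_{tt},\varphi\r.
\]
Substituting $\rho_2\psi_{tt} = b\psi_{xx} - \kappa(\varphi_x+\psi) - \delta\theta_x$ and integrating by parts, the term $2\rho_2\l\psi_{tt},\psi\r$ yields $-2b\|\psi_x\|^2 - 2\kappa\l\varphi_x+\psi,\psi\r + 2\delta\l\theta,\psi_x\r$, where the boundary contribution $[\theta\psi]_0^\ell$ drops out because $\theta$ is Dirichlet. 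Likewise, substituting $\rho_1\varphi_{tt} = \kappa(\varphi_x+\psi)_x$, the term $-2\rho_1\l\varphi_{tt},\varphi\r$ becomes $2\kappa\l\varphi_x+\psi,\varphi_x\r$.

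Writing $\varphi_x = (\varphi_x+\psi)-\psi$ in this last expression and collecting the $\kappa$-contributions, I expect to arrive at the exact differential identity
\[
\ddt L + 2\rho_1\|\varphi_t\|^2 + 2b\|\psi_x\|^2 = 2\rho_2\|\psi_t\|^2 + 2\kappa\|\varphi_x+\psi\|^2 - 4\kappa\l\varphi_x+\psi,\psi\r + 2\delta\l\theta,\psi_x\r.
\]
It then remains only to dispose of the two cross terms on the right. For $-4\kappa\l\varphi_x+\psi,\psi\r$ I would apply a Young inequality followed by the Poincar\'e inequality $\|\psi\|\leq C\|\psi_x\|$ valid on the zero-mean space $H^1_*({\mathfrak I})$, bounding it by $\tfrac{b}{2}\|\psi_x\|^2 + C\|\varphi_x+\psi\|^2$; for $2\delta\l\theta,\psi_x\r$ a plain Young inequality gives $\tfrac{b}{2}\|\psi_x\|^2 + C\|\theta\|^2$. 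The two $\psi_x$-errors sum to $b\|\psi_x\|^2$, which I absorb into the $2b\|\psi_x\|^2$ on the left, leaving $b\|\psi_x\|^2$ intact; discarding the surplus $\rho_1\|\varphi_t\|^2$ then produces the claimed inequality, all remaining right-hand terms being of the form $\|\varphi_x+\psi\|^2$, $\|\psi_t\|^2$, $\|\theta\|^2$.

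The computation is otherwise routine; the only point demanding genuine care is the bookkeeping of the Young constants, so that the two $\|\psi_x\|^2$ errors together do not exceed the coefficient $2b$ available on the left. This is precisely where the zero-mean constraint on $\psi$ enters: the Poincar\'e inequality on $H^1_*({\mathfrak I})$ is exactly what permits the $\|\psi\|$ coming from the $\kappa$ cross term to be reabsorbed into the coercive $\|\psi_x\|^2$, a control that would be unavailable outside the $L^2_*({\mathfrak I})$ setting.
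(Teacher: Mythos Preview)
Your proposal is correct and follows essentially the same route as the paper: you derive the identical differential identity
\[
\ddt L + 2\rho_1\|\varphi_t\|^2 + 2b\|\psi_x\|^2= 2\rho_2\|\psi_t\|^2 + 2\delta \l \theta,\psi_x\r + 2\kappa\|\varphi_x+\psi\|^2 -4\kappa\l \varphi_x+\psi,\psi\r
\]
and then absorb the cross terms via Young and Poincar\'e, which is exactly what the paper does (only more tersely, stating that the right-hand side ``is easily controlled by'' $b\|\psi_x\|^2 + C[\cdots]$). Your explicit bookkeeping of the $\tfrac{b}{2}+\tfrac{b}{2}$ split and the remark on where the zero-mean condition enters are welcome clarifications, but the argument is the same.
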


\begin{proof}
By means of \eqref{Tuno} and \eqref{Tdue},
$$
\ddt L + 2\rho_1\|\varphi_t\|^2 + 2b\|\psi_x\|^2= 2\rho_2\|\psi_t\|^2 
+ 2\delta \l \theta,\psi_x\r + 2\kappa\|\varphi_x+\psi\|^2 -4\kappa\l \varphi_x+\psi,\psi\r.
$$
Since the right-hand side is easily controlled by
$$
b\|\psi_x\|^2 + C\big[\|\varphi_x+\psi\|^2
+\|\psi_t\|^2 + \|\theta\|^2\big],
$$
we are done.
\end{proof}

\section{Proof of Theorem \ref{MAIN} (Sufficiency)}
\label{Suf}

\noindent
Within the condition $\chi_g = 0$, we are now in the position to prove the exponential stability of $S(t)$.
In what follows, $E$ is (twice) the energy,
whereas $I,J,K,L$ denote the functionals of
the previous Section \ref{Aux}.

\subsection{A further energy functional}
For $\eps>0$, we define
$$
M_\eps(t) = I(t) + \eps J(t) + \frac{\eps\rho_2}{2 c_K} K(t) + \eps\sqrt{\eps}\, L(t),
$$
where $c_K>0$ is the constant of Lemma \ref{Lkappa}.

\begin{lemma}
\label{Lemme}
For every $\eps>0$ sufficiently small, the differential inequality
$$
\ddt M_\eps + \eps^2 E
\leq \frac{c_M}{\eps} \Gamma[\eta]
$$
holds for some $c_M>0$ independent of $\eps$.
\end{lemma}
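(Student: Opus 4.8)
The plan is to add the four differential inequalities of Lemmas~\ref{Li}, \ref{Lgei}, \ref{Lkappa} and \ref{Lelle}, weighted respectively by $1$, $\eps$, $\frac{\eps\rho_2}{2c_K}$ and $\eps\sqrt\eps$, exactly as dictated by the definition of $M_\eps$ (recall that Lemma~\ref{Lkappa} is invoked under the standing assumption $\chi_g=0$). The free small parameters $\eps_I$ and $\eps_J$ will then be chosen as suitable powers of $\eps$, so that after absorbing every error term the surviving dissipation on the left-hand side controls $\eps^2 E$.

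First I would record the summed inequality. On the left appear the six \emph{good} dissipative quantities
$$
\rho_3\|\theta\|^2,\quad \|\eta\|_\M^2,\quad \eps\rho_2\|\psi_t\|^2,\quad \tfrac{\eps\rho_2\kappa}{2c_K}\|\varphi_x+\psi\|^2,\quad \eps\sqrt\eps\,\rho_1\|\varphi_t\|^2,\quad \eps\sqrt\eps\,b\|\psi_x\|^2,
$$
at the three distinct orders $1$, $\eps$ and $\eps\sqrt\eps$. The decisive piece of bookkeeping is that the weight $\frac{\eps\rho_2}{2c_K}$ on $K$ is tuned so that the term $c_K\|\psi_t\|^2$ coming from Lemma~\ref{Lkappa} contributes only $\frac{\eps\rho_2}{2}\|\psi_t\|^2$ on the right, that is, precisely half of the good quantity $\eps\rho_2\|\psi_t\|^2$ produced by $\eps J$. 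This is what lets $K$ generate the dissipation $\|\varphi_x+\psi\|^2$ without spoiling the control of $\|\psi_t\|^2$.

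The crux is then the choice of $\eps_I$ and $\eps_J$. I would take $\eps_I$ proportional to $\eps$ (say $\eps_I=\eps\rho_2/4$) and $\eps_J$ proportional to $\sqrt\eps$. With $\eps_I\sim\eps$ the bad term $\eps_I\|\psi_t\|^2$ drops to order $\eps$ and is absorbed into $\eps\rho_2\|\psi_t\|^2$, while the price $\frac{c_I}{\eps_I}\Gamma[\eta]$ becomes of order $\frac1\eps\Gamma[\eta]$, which is exactly the form $\frac{c_M}{\eps}\Gamma[\eta]$ demanded by the statement. The reason $\eps_J$ cannot be constant is the term $\|\psi_x\|^2$: it is recovered only through $L$, hence only at order $\eps\sqrt\eps$, whereas $J$ produces a bad contribution $\eps\eps_J\|\psi_x\|^2$. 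Matching $\eps\eps_J$ to $\eps\sqrt\eps$ forces $\eps_J\sim\sqrt\eps$, and with a small enough proportionality constant the bad $\eps\eps_J\|\psi_x\|^2$ is absorbed into $\eps\sqrt\eps\,b\|\psi_x\|^2$ with room to spare. The residual error terms carrying $\|\varphi_x+\psi\|^2$ and $\|\theta\|^2$ are then of strictly higher order in $\eps$ than the good quantities into which they must go (orders $\eps^{3/2}$ against $\eps$, and $\sqrt\eps$ against $1$), hence controlled for $\eps$ small; and all remaining $\Gamma[\eta]$ contributions are of lower order than $\frac1\eps\Gamma[\eta]$ and are swept into $c_M$.

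Once every error term has been absorbed, a fixed fraction of each of the six good quantities survives on the left, at orders $1$, $\eps$ and $\eps\sqrt\eps$ respectively. Since each of these orders exceeds $\eps^2$ for $\eps$ small, and since the surviving coefficients dominate the corresponding coefficients of $E$, the left-hand side controls $\eps^2 E$, giving the claim with $c_M$ independent of $\eps$. The main obstacle is exactly this multi-scale balancing: there is a genuine hierarchy $1\gg\eps\gg\eps\sqrt\eps\gg\eps^2$ among the dissipative terms, and the scalings $\eps_I\sim\eps$, $\eps_J\sim\sqrt\eps$ together with the weight $\eps\sqrt\eps$ on $L$ are all forced by the requirement that $\|\psi_x\|^2$ and $\|\varphi_t\|^2$ — the two norms that only $L$ can supply — enter at a power still larger than $\eps^2$.
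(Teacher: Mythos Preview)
Your proposal is correct and follows essentially the same approach as the paper: combine the four lemmas with the weights dictated by $M_\eps$, then choose $\eps_I$ and $\eps_J$ as powers of $\eps$ so that all error terms are absorbed. The paper in fact takes $\eps_I=\eps\rho_2/4$ (your choice) but $\eps_J=2\eps c_J/\rho_3$, i.e.\ $\eps_J\sim\eps$ rather than $\eps_J\sim\sqrt\eps$; your claim that the $\|\psi_x\|^2$ balance ``forces $\eps_J\sim\sqrt\eps$'' is slightly overstated, since that balance only gives the upper constraint $\eps_J\lesssim\sqrt\eps$, while the $\|\theta\|^2$ balance (the term $\eps c_J/\eps_J$ must stay below $\rho_3$) gives the lower constraint $\eps_J\gtrsim\eps$ --- any choice in this window works, and you and the paper simply sit at opposite ends of it.
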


\begin{proof}
Collecting the inequalities of Lemmas \ref{Li}, \ref{Lgei}, \ref{Lkappa} and \ref{Lelle},
we end up with
\begin{align*}
&\ddt M_\eps+\eps\Big(\frac{\kappa\rho_2}{2c_K}-\eps_J-\sqrt{\eps}\, c_L\Big)\|\varphi_x+\psi\|^2
+\eps\sqrt{\eps}\,\rho_1\|\varphi_t\|^2+\eps(\sqrt{\eps}\,b-\eps_J)\|\psi_x\|^2\\
&\quad
+\Big(\frac{\eps\rho_2}{2}-\eps_I-\eps\sqrt{\eps}\, c_L\Big)\|\psi_t\|^2
+\Big(\rho_3-\eps\sqrt{\eps}\, c_L-\frac{\eps c_J}{\eps_J}\Big)\|\theta\|^2+\|\eta\|^2_\M\\
&\leq \Big(\frac{c_I}{\eps_I}+\frac{\eps c_J}{\eps_J}+\frac{\eps\rho_2}{2}\Big)\Gamma[\eta].
\end{align*}
At this point, we choose
$$
\eps_I = \frac{\eps\rho_2}{4} \and \eps_J = \frac{2\eps c_J}{\rho_3}.
$$
Taking $\eps>0$ sufficiently small, the claim follows.
\end{proof}

\subsection{Conclusion of the proof of Theorem \ref{MAIN}}
By virtue of \eqref{Eidentity} and Lemma \ref{Lemme},
for $\eps>0$ sufficiently small the functional
$$
G_\eps(t) = E(t) + \eps^2 M_\eps(t)
$$
fulfills the differential inequality
$$
\ddt G_\eps + \eps^4 E\leq -\Big(\frac{1}{\beta}-c_M\eps\Big)\Gamma[\eta]\leq 0.
$$
It is also clear from the definition of the functionals involved that,
for all $\eps>0$ small,
$$\frac12 E(t)\leq G_\eps(t)\leq 2E(t).$$
Therefore, an application of the Gronwall lemma entails the required exponential decay
of the energy.
\qed

\begin{remark}
It is worth observing that, contrary to what done in \cite{SJR}, here the proof of exponential stability
is based on the construction of explicit energy-like functionals.
The advantage (with respect to linear semigroups techniques) is that the same calculations
apply to the analysis of nonlinear version of the problem, allowing, for instance, to prove the
existence of absorbing sets.
\end{remark}

\section{Proof of Theorem \ref{MAIN} (Necessity)}
\label{Nec}

\noindent
In this section, we show that the semigroup $S(t)$ is not exponentially stable when the stability number
$\chi_g$ is different from zero.
The proof is based on the following abstract result from~\cite{Pru} (see also~\cite{GNP} for the statement used here).

\begin{lemma}
\label{pruss}
$S(t)$ is exponentially stable
if and only if there exists $\eps>0$ such that
\begin{equation}
\label{cns}
\inf_{\lambda\in \R}\|\i\lambda z-\A z\|_{\H}\geq \eps\|z\|_{\H},\quad \forall z \in \D(\A),
\end{equation}
where $\A$ and $\H$ are understood to be the complexifications
of the original infinitesimal generator and
phase space, respectively.
\end{lemma}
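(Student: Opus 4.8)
This is the Gearhart--Pr\"uss--Huang resolvent characterization of exponential stability, which holds precisely because $\H$ is a Hilbert space; I would either cite it, as the authors do, or reconstruct it as follows. The first step is to reformulate \eqref{cns}. Since $\A$ generates a contraction semigroup it is m-dissipative, so $\{\Re\lambda>0\}\subset\rho(\A)$ and $\sigma(\A)\subset\{\Re\lambda\leq0\}$. The uniform lower bound in \eqref{cns} forbids approximate eigenvalues on the imaginary axis; as the topological boundary of $\sigma(\A)$ consists of approximate eigenvalues, this forces $\i\R\subset\rho(\A)$, and the lower bound then reads
$$
M:=\sup_{\lambda\in\R}\|(\i\lambda-\A)^{-1}\|_{\B(\H)}\leq\frac1\eps<\infty.
$$
Thus \eqref{cns} is equivalent to the imaginary axis lying in the resolvent set together with a uniform resolvent bound there, and this is the form I would work with. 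The necessity direction is then routine: if $\|S(t)z\|_\H\leq C\e^{-\omega t}\|z\|_\H$, the Laplace representation $(\i\lambda-\A)^{-1}=\int_0^\infty\e^{-\i\lambda t}S(t)\,\d t$ converges for every $\lambda\in\R$ and gives $M\leq C/\omega$, hence \eqref{cns} with $\eps=\omega/C$.

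For the sufficiency direction the Hilbert structure, through Plancherel's theorem, is indispensable. First I would propagate the axis bound into a thin vertical strip: from the factorization $\eps+\i\lambda-\A=(\i\lambda-\A)\big(I+\eps(\i\lambda-\A)^{-1}\big)$ and a Neumann series, the resolvent stays bounded by $2M$ on $\{0<\Re\lambda<\tfrac1{2M}\}$. Consequently the damped kernels $S_\eps(t)=\e^{-\eps t}S(t)$ are norm-integrable, their Fourier transforms are exactly $(\eps+\i\,\cdot\,-\A)^{-1}$, and Plancherel shows that the convolution operators
$$
(\Lambda_\eps f)(t)=\int_0^t\e^{-\eps(t-\tau)}S(t-\tau)f(\tau)\,\d\tau
$$
are bounded on $L^2(\R^+;\H)$, uniformly for small $\eps$, with norm at most $2M$. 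Letting $\eps\to0$ and invoking Fatou, the undamped convolution $\Lambda f=\int_0^t S(t-\tau)f(\tau)\,\d\tau$ is bounded on $L^2(\R^+;\H)$.

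With $\Lambda$ in hand the conclusion is quick. Given $z\in\H$, set $f(\tau)=S(\tau)z$ for $\tau\in[0,1]$ and $f(\tau)=0$ otherwise; then $f\in L^2(\R^+;\H)$ with $\|f\|_{L^2}\leq\|z\|_\H$ by contractivity, and for $t\geq1$ the semigroup law gives $S(t)z=\int_0^1 S(t-\tau)S(\tau)z\,\d\tau=(\Lambda f)(t)$. Hence $\int_0^\infty\|S(t)z\|_\H^2\,\d t\leq C\|z\|_\H^2$, a square-integrability bound \emph{uniform} in $z$. Finally, contractivity makes $t\mapsto\|S(t)z\|_\H$ nonincreasing, so $t\|S(t)z\|_\H^2\leq\int_0^t\|S(s)z\|_\H^2\,\d s\leq C\|z\|_\H^2$; thus $\|S(t_0)\|_{\B(\H)}<1$ for $t_0$ large, which yields exponential stability via $\|S(nt_0)\|_{\B(\H)}\leq\|S(t_0)\|_{\B(\H)}^n$. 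The main obstacle is the passage from the \emph{pointwise} resolvent bound on the imaginary axis to the boundedness of the convolution operator $\Lambda$ on $L^2$: this is exactly the Fourier-multiplier step, where Plancherel enters essentially and where the analogous statement is known to fail on general Banach spaces. Everything else is bookkeeping.
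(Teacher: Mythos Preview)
The paper does not prove this lemma at all: it is quoted as an abstract result from \cite{Pru} (with the precise formulation attributed to \cite{GNP}), and the authors simply invoke it as a black box in Section~\ref{Nec}. Your proposal is therefore not competing with any argument in the paper; rather, you have supplied a correct reconstruction of the Gearhart--Pr\"uss--Huang theorem where the authors chose to cite.

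Your reconstruction is sound and follows the standard route. The reformulation of \eqref{cns} as ``$\i\R\subset\rho(\A)$ with uniform resolvent bound'' is justified exactly as you say, via the fact that boundary spectral points of a closed operator are approximate eigenvalues together with the Hille--Yosida inclusion $\{\Re\lambda>0\}\subset\rho(\A)$. The necessity direction is immediate from the Laplace representation. For sufficiency, your chain---Neumann-series propagation into a strip, Plancherel for $\H$-valued $L^2$ to bound the convolution $\Lambda_\eps$, Fatou to pass to $\eps=0$, the trick $f=S(\cdot)z\cdot\mathbf{1}_{[0,1]}$ to extract uniform square-integrability of orbits, and monotonicity of $t\mapsto\|S(t)z\|_\H$ to conclude $\|S(t_0)\|<1$---is precisely the classical argument, and you correctly flag the Plancherel step as the place where the Hilbert structure is essential (and where the Banach-space counterexamples live). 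The only cosmetic remark is that in the strip step one should say $0<\eps<\tfrac{1}{2M}$ rather than $0<\Re\lambda<\tfrac{1}{2M}$, but the computation you carry out makes clear you mean the former.
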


The strategy consists in verifying that condition \eqref{cns} fails to hold.
Without loss of generality, we can take $\ell = \pi$.
Accordingly, for every $n\in\N$, the vector
$$
\textstyle \zeta_n = \big(0,\frac{\sin{nx}}{\rho_1},0,0,0,0\big)
$$
satisfies
$$
\textstyle\|\zeta_n\|_\H = \sqrt{\frac{\pi}{2 \rho_1}}\,.
$$
For all $n\in\N$,
we denote for short
$$
\textstyle \lambda_n = \sqrt{\frac{\kappa}{\rho_1}}\, n
$$
and we study the equation
$$
\i\lambda_n z_n-\A z_n=\zeta_n
$$
in the unknown variable
$$z_n =(\varphi_n,\tilde\varphi_n,\psi_n,\tilde\psi_n,\theta_n,\eta_n).$$
Our conclusion is reached if we show that $z_n$ is not bounded in $\H$,
since this would violate \eqref{cns}.
Straightforward calculations entail the system
$$
\begin{cases}
\rho_1 \lambda_n^2 \varphi_n + \kappa (\varphi_{nx}+\psi_n)_x= -\sin{nx},\\
\noalign{\vskip1.5mm}
\rho_2\lambda_n^2 \psi_n + b \psi_{nxx} - \kappa (\varphi_{nx}+\psi_n) - \delta \theta_{nx} = 0,\\
\noalign{\vskip1.5mm}
\displaystyle\i\rho_3\lambda_n\theta_n - \frac{1}{\beta}\int_0^\infty \mu(s)\eta_{nxx}(s)\,\d s
+ \i \delta \lambda_n \psi_{nx} =0,\\
\noalign{\vskip1mm}
\i\lambda_n\eta_n -T\eta_n - \theta_n =0.
\end{cases}
$$
We now look for solutions (compatible with the boundary conditions) of the form
\begin{align*}
\varphi_n &= A_n \sin{nx},\\
\psi_n &= B_n \cos{nx},\\
\theta_n &= C_n \sin{nx},\\
\eta_n &= \phi_n(s) \sin{nx},
\end{align*}
for some $A_n,B_n,C_n\in\mathbb{C}$ and some complex square summable function
$\phi_n$ on $\R^+$ with respect to the measure $\mu(s)\d s$,
satisfying $\phi_n(0)=0$. 
This yields
$$
\begin{cases}
\kappa n B_n = 1,\\
\noalign{\vskip1.5mm}
k n A_n + \big(-\rho_2\lambda_n^2  + b n^2 + \kappa\big) B_n + \delta n C_n = 0,\\
\noalign{\vskip1.5mm}
\displaystyle\i\rho_3\lambda_n C_n
+ \frac{n^2}{\beta}\int_0^\infty \mu(s)\phi_{n}(s)\,\d s - \i \delta \lambda_n n B_n =0,\\
\noalign{\vskip1mm}
\i\lambda_n\phi_n +\phi'_n - C_n =0.
\end{cases}
$$
An integration of the last equation gives
$$
\phi_n(s) = \frac{C_n}{\i\lambda_n} \big(1-\e^{-\i\lambda_n s} \big).
$$
Substituting the result into the third equation above, and denoting by
$$
\hat\mu(\lambda_n)=\int_0^\infty \mu(s)\e^{-\i\lambda_n s}\,\d s
$$
the Fourier transform\footnote{Since $\mu$ is continuous nonincreasing and summable,
it is easy to see that $\hat\mu(\lambda_n)\neq 0$ for every $n$.} of $\mu$, we find the
explicit solution
$$
A_n = \frac{\rho_2\kappa n^2 - \rho_1 b n^2 -\rho_1\kappa}{\rho_1 \kappa^2 n^2}
-\frac{\delta^2\beta}{\rho_3\kappa\beta\gamma_g + \rho_1 \kappa \hat\mu(\lambda_n)},
$$
where, according to the notation of Section \ref{Aux},
$$
\gamma_g = \kappa - \frac{g(0)\rho_1}{\beta\rho_3}.
$$
At this point, we consider separately two cases.

\subsection*{Case $\boldsymbol{\gamma_g = 0}$}
We have
$$
A_n = \frac{\rho_2\kappa n^2 - \rho_1 b n^2 -\rho_1\kappa}{\rho_1 \kappa^2 n^2}
-\frac{\delta^2\beta}{\rho_1 \kappa \hat\mu(\lambda_n)}.
$$
Due to the convergence $\hat\mu(\lambda_n)\to 0$, ensured by the Riemann-Lebesgue lemma, we find the asymptotic expression
as $n\to\infty$
$$
A_n \sim -\frac{\delta^2\beta}{\rho_1 \kappa \hat\mu(\lambda_n)}.
$$
Since
$$\|z_n\|^2_\H\geq \kappa\|\varphi_{nx}+\psi_n\|^2
+b\|\psi_{nx}\|^2,
$$
there exists $\varpi>0$ such that
$$
\|z_n\|_\H \geq \varpi\|\varphi_{nx}\|=\varpi n|A_n|\bigg(\int_0^\pi \cos^2 nx\,\d x\bigg)^\frac12
=\frac{\varpi \sqrt{\pi}}{\sqrt{2}}n|A_n| \to \infty.
$$

\subsection*{Case $\boldsymbol{\gamma_g \not= 0}$}
Exploiting again the Riemann-Lebesgue lemma, we now get
$$
A_n \to \frac{1}{\kappa}\bigg(\frac{\rho_2}{\rho_1} - \frac{b}{\kappa}\bigg)
-\frac{\delta^2}{\rho_3\kappa\gamma_g }
=\frac{\rho_3 b g(0)}{\rho_1\rho_3\beta \gamma_g}\,\chi_g\neq 0,
$$
as $\chi_g\neq 0$ by assumption. As before, we end up with
$$
\|z_n\|_\H \geq \frac{\varpi \sqrt{\pi}}{\sqrt{2}}n|A_n| \to \infty.
$$
This finishes the proof.
\qed

\begin{remark}
The proof above actually holds within the same minimal assumptions on the memory kernel ensuring the existence
of $S(t)$, i.e.\ $\mu$ nonnull, nonnegative, nonincreasing,
absolutely continuous and
summable on $\R^+$.
\end{remark}

\section{More on the Comparison with the Cattaneo Model}
\label{GoodComp}

\noindent
As previously observed in Section~\ref{Comparison}, at a formal level it is possible to recover
the exponential stability (as well as the lack of exponential stability) of the
Timoshenko-Cattaneo system from our main Theorem~\ref{MAIN}.
Here, we give a rigorous proof of this fact. To this aim, let us
write explicitly the system studied in \cite{SR,SJR}
$$
\begin{cases}
\rho_1 \varphi_{tt} -\kappa(\varphi_x +\psi)_x = 0,\\
\rho_2 \psi_{tt} -b\psi_{xx} +\kappa(\varphi_x +\psi) +\delta\theta_x= 0,\\
\rho_3 \theta_t + q_x +\delta\psi_{tx}=0,\\
\tau q_t + \beta q + \theta_x = 0,
\end{cases}
$$
which generates a contraction semigroup $\hat S(t)$ acting on the phase space
$$
\hat\H = H_0^1({\mathfrak I})\times L^2({\mathfrak I})\times H^1_*({\mathfrak I}) \times L^2_*({\mathfrak I}) \times L^2({\mathfrak I}) \times L^2({\mathfrak I})
$$
normed by
$$
\|( \varphi,\tilde\varphi,\psi,\tilde\psi,\theta,q )\|_{\hat \H}^2 = \kappa\|\varphi_x+\psi\|^2 +\rho_1\|\tilde\varphi\|^2
+b\|\psi_x\|^2+\rho_2\|\tilde\psi\|^2 + \rho_3\|\theta\|^2 + \tau\|q\|^2.
$$
According to the article \cite{SJR}, the semigroup
$\hat S(t)$ is exponentially stable if and only if the stability number
$\chi_\tau$ equals zero.

\smallskip
Along with $\hat S(t)$, we consider the semigroup $S(t)$ on $\H$ generated by \eqref{EQLIN}
for the particular choice of the kernel
$$
\mu(s)=-g_\tau'(s)=\frac{\beta^2}{\tau^2} \,\e^{-\frac{s \beta}{\tau}}.
$$
Then, we define the map $\Lambda:\M\to L^2({\mathfrak I})$ as
$$\Lambda\eta=- \frac{1}{\beta}\int_0^\infty \mu(s)\eta_x(s)\,\d s.
$$
On account of the H\"older inequality,
\begin{equation}
\label{trerry}
\tau\|\Lambda\eta\|^2 \leq \frac{\tau}{\beta^2}\bigg[\int_0^\infty \mu(s)\|\eta_x(s)\|\,\d s\bigg]^2
\leq \frac{1}{\beta}\|\eta\|_\M^2.
\end{equation}
Due to the peculiar form of the kernel, the following result is a direct consequence of the equations.
The easy proof is left to the reader.

\begin{lemma}
\label{iddu}
Let $z_0=(u_0,\eta_0)\in\H$ be any initial datum, where $u_0$ subsumes the first 5 components of $z_0$,
and call $\hat z_0=(u_0,\Lambda\eta_0)\in\hat\H$. Then
the first 5 components of $S(t)z_0$ and $\hat S(t)\hat z_0$ coincide.
Besides, the last component $q(t)$ of $\hat S(t)\hat z_0$ fulfills
the equality
$$q(t)=\Lambda\eta^t,$$
where $\eta^t$ is the last component of $S(t)z_0$.
\end{lemma}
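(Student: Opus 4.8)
The plan is to start from the Gurtin--Pipkin solution $S(t)z_0=(u(t),\eta^t)$, where $u$ collects the first five components, to set $q(t):=\Lambda\eta^t$, and to check that the sextuple $(u(t),q(t))$ solves the Timoshenko--Cattaneo system with initial datum $\hat z_0=(u_0,\Lambda\eta_0)$. Since $\hat S(t)$ is the unique contraction semigroup generated by that system, uniqueness of solutions then forces $(u(t),q(t))=\hat S(t)\hat z_0$, which is exactly the two assertions of the lemma (note that $q(0)=\Lambda\eta^0=\Lambda\eta_0$ matches the sixth component of $\hat z_0$). I would run the computation below for data in $\D(\A)$, where every manipulation is legitimate, and then extend to arbitrary $z_0\in\H$ by density together with the continuity of the two semigroups and the bound \eqref{trerry}, which guarantees $\Lambda\in\mathcal L(\M,L^2({\mathfrak I}))$.

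The five mechanical and thermal equations require almost no work. The first two equations of \eqref{sys} involve only the components of $u$ and are literally the same in both systems. For the third, since $\Lambda\eta=-\tfrac1\beta\int_0^\infty\mu(s)\eta_x(s)\,\d s$, differentiating in $x$ gives $q_x=-\tfrac1\beta\int_0^\infty\mu(s)\eta_{xx}(s)\,\d s$, so that \eqref{Ttre} becomes exactly $\rho_3\theta_t+q_x+\delta\psi_{tx}=0$, the third Cattaneo equation. Thus the only genuine point is to derive the constitutive law $\tau q_t+\beta q+\theta_x=0$ for $q=\Lambda\eta^t$.

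Here the special structure of the kernel enters through the ODE $\mu'(s)=-\tfrac\beta\tau\mu(s)$ and the mass $\int_0^\infty\mu(s)\,\d s=g(0)=\beta/\tau$. Differentiating $q=\Lambda\eta^t$ in time and inserting the transport equation \eqref{Tquattro}, $\eta_t=T\eta+\theta=-D\eta+\theta$, I get
$$q_t=-\frac1\beta\int_0^\infty\mu(s)\big[-D\eta_x(s)+\theta_x\big]\,\d s
=\frac1\beta\int_0^\infty\mu(s)\,D\eta_x(s)\,\d s-\frac{\theta_x}\beta\int_0^\infty\mu(s)\,\d s.$$
An integration by parts in $s$ annihilates the boundary terms --- thanks to $\lim_{s\to0}\|\eta_x(s)\|=0$ and the decay of $\mu(s)\eta_x(s)$ --- and converts the first integral into $-\int_0^\infty\mu'(s)\eta_x(s)\,\d s=\tfrac\beta\tau\int_0^\infty\mu(s)\eta_x(s)\,\d s=-\tfrac{\beta^2}\tau q$, where I used $\int_0^\infty\mu(s)\eta_x(s)\,\d s=-\beta q$. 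Collecting the two contributions gives $q_t=-\tfrac\beta\tau q-\tfrac1\tau\theta_x$, which is precisely the Cattaneo law after multiplication by $\tau$.

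I expect the only delicate point to be the rigor of this integration by parts and of the differentiation under the integral sign: both are clean for $z_0\in\D(\A)$, where $\eta\in\D(T)$ secures $\lim_{s\to0}\|\eta_x(s)\|=0$ and $D\eta\in\M$, while the passage to general data in $\H$ is then routine by density and continuity. Once the constitutive law is in hand, the uniqueness argument for $\hat S(t)$ closes the proof.
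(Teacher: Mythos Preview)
Your argument is correct and is precisely the ``direct consequence of the equations'' that the paper alludes to while leaving the proof to the reader: you exploit the identity $\mu'=-\tfrac\beta\tau\mu$ and the mass $\int_0^\infty\mu=\beta/\tau$ to turn the transport equation for $\eta$ into the Cattaneo law for $q=\Lambda\eta^t$, and then invoke uniqueness for $\hat S(t)$. The care you take with the domain and the density extension is appropriate and makes the sketch rigorous.
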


The full equivalence between the two models is established in the next two propositions.

\begin{proposition}
If $S(t)$ is exponentially stable on $\H$, then so is $\hat S(t)$ on $\hat \H$.
\end{proposition}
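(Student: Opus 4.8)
The plan is to carry the exponential decay of $S(t)$ over to $\hat S(t)$ through the pointwise correspondence of Lemma~\ref{iddu} together with the contractivity estimate \eqref{trerry}. The one genuine obstruction is that the map $\Lambda$ never produces a flux of nonzero spatial mean: for $\eta\in\M$ each $\eta_x(s)$ has zero average, hence $\Lambda\eta\in L^2_*(\mathfrak I)$, and no history can represent a flux datum with $\int_0^\ell q_0\,\d x\neq0$. I therefore begin by peeling off this mean. Writing an arbitrary $\hat z_0=(u_0,q_0)\in\hat\H$ as $\hat z_0=(u_0,p_0)+(0,c_0)$, where $c_0$ is the constant function equal to the mean of $q_0$ and $p_0\in L^2_*(\mathfrak I)$ is its zero-mean part, I exploit linearity of $\hat S(t)$. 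The purely constant contribution is harmless: with $\varphi=\psi=\theta=0$ and $q$ independent of $x$ one has $\theta_x=0$ and $q_x=0$, so the Cattaneo system collapses to $\tau\dot c+\beta c=0$; by uniqueness $\hat S(t)(0,c_0)=(0,0,0,0,0,c_0\e^{-\beta t/\tau})$, whence $\|\hat S(t)(0,c_0)\|_{\hat\H}\le\|\hat z_0\|_{\hat\H}\,\e^{-\beta t/\tau}$.

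For the zero-mean part I construct a bounded right inverse of $\Lambda$ on $L^2_*(\mathfrak I)$. Setting $w_0(x)=-\beta\int_0^x p_0(y)\,\d y$, the vanishing mean of $p_0$ gives $w_0\in H^1_0(\mathfrak I)$ with $w_{0x}=-\beta p_0$. Choosing the history constant in the memory variable, $\eta_0(s)\equiv w_0/g(0)$, the normalization $\int_0^\infty\mu(s)\,\d s=g(0)$ yields at once $\Lambda\eta_0=p_0$ together with the finite norm $\|\eta_0\|_\M^2=g(0)^{-1}\|w_{0x}\|^2$. Hence $z_0:=(u_0,\eta_0)\in\H$ satisfies $\Lambda\eta_0=p_0$, and since the first five components enter the $\H$- and $\hat\H$-norms identically while the sixth contributions $\tfrac1\beta\|\eta_0\|_\M^2$ and $\tau\|p_0\|^2$ are comparable, I get $\|z_0\|_\H\le C\|(u_0,p_0)\|_{\hat\H}\le C\|\hat z_0\|_{\hat\H}$ for a structural constant $C$.

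With the lift in hand, Lemma~\ref{iddu} applied to $z_0$ shows that $\hat S(t)(u_0,p_0)$ shares its first five components with $S(t)z_0$ and has flux $q(t)=\Lambda\eta^t$, where $\eta^t$ is the history component of $S(t)z_0$. Comparing the two norms term by term and invoking \eqref{trerry} in the form $\tau\|\Lambda\eta^t\|^2\le\frac1\beta\|\eta^t\|_\M^2$, the five identical contributions match and the flux term is dominated, giving $\|\hat S(t)(u_0,p_0)\|_{\hat\H}^2\le\|S(t)z_0\|_\H^2$. Feeding in the exponential decay of $S(t)$ and the bound $\|z_0\|_\H\le C\|\hat z_0\|_{\hat\H}$ produces $\|\hat S(t)(u_0,p_0)\|_{\hat\H}\le C'\e^{-\omega t}\|\hat z_0\|_{\hat\H}$. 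Summing with the constant-mode estimate and setting $\omega'=\min\{\omega,\beta/\tau\}$ gives the claimed exponential stability of $\hat S(t)$. The main obstacle is thus entirely the non-surjectivity of $\Lambda$; once the autonomously decaying spatial mean of the flux is removed and a bounded lift is produced on the zero-mean complement, the conclusion follows from a routine term-by-term norm comparison.
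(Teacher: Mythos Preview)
Your argument is correct and follows essentially the same route as the paper: build a constant-in-$s$ history that $\Lambda$ maps to the prescribed flux, then invoke Lemma~\ref{iddu} together with the norm comparison~\eqref{trerry}. Your lift $\eta_0=w_0/g(0)$ with $w_0(x)=-\beta\int_0^x p_0$ is, after inserting $g(0)=\beta/\tau$ for the Cattaneo kernel, exactly the paper's choice $\eta_0(x,s)=-\tau\int_0^x q_0(y)\,\d y$; in fact the sixth norm contributions are not merely comparable but equal, $\tfrac1\beta\|\eta_0\|_\M^2=\tau\|p_0\|^2$.

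The one substantive difference is your treatment of the spatial mean of $q_0$. You are right that $\Lambda$ takes values in $L^2_*(\mathfrak I)$, since $\eta(s)\in H^1_0(\mathfrak I)$ forces $\int_0^\ell\eta_x(s)\,\d x=0$. The paper's proof writes down the lift for a generic $q_0\in L^2(\mathfrak I)$ without this restriction, which tacitly requires $\int_0^\ell q_0=0$ in order that $\eta_0\in\M$. Your decomposition $q_0=p_0+c_0$ and the explicit computation $\hat S(t)(0,c_0)=(0,c_0\e^{-\beta t/\tau})$ via the scalar ODE $\tau\dot c+\beta c=0$ cleanly close this gap. So your proof is the same strategy, carried out with an extra (and necessary) bit of care.
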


\begin{proof}
Let $\hat z_0=(u_0,q_0)\in\hat\H$ be fixed.
Choosing $\eta_0\in\M$ of the form
$$
\eta_0(x,s) = -\tau \int_0^x q_0(y)\,\d y,
$$
it is readily seen that $\Lambda\eta_0=q_0$ and
$$
\frac{1}{\beta}\|\eta_0\|_\M^2 = \frac{\tau^2}{\beta} \|q_0\|^2 \int_0^\infty \mu(s)\,\d s = \tau\|q_0\|^2.
$$
Lemma \ref{iddu} yields the identity
$$\|\hat S(t)\hat z_0\|_{\hat \H}
=\|\hat S(t)(u_0,\Lambda\eta_0)\|_{\hat \H}=\|(u(t),\Lambda\eta^t)\|_{\hat \H},
$$
where $u(t)$ denotes the first 5 components of either solution.
On the other hand, we infer from~\eqref{trerry} and the exponential stability of $S(t)$ that
$$\|(u(t),\Lambda\eta^t)\|_{\hat \H}\leq
\|(u(t),\eta^t)\|_{\H}\leq C\|(u_0,\eta_0)\|_\H\e^{-\omega t},
$$
for some $\omega>0$ and $C\geq 1$.
Since
$$\|(u_0,\eta_0)\|_\H=\|\hat z_0\|_{\hat \H},$$
we are finished.
\end{proof}

\begin{proposition}
If $\hat S(t)$ is exponentially stable on $\hat \H$, then so is $S(t)$ on $\H$.
\end{proposition}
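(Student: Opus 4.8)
The plan is to run a variation-of-constants argument on the Gurtin--Pipkin energy $E(t)=\|S(t)z_0\|_\H^2$, exploiting that for the specific exponential kernel at hand the dissipation functional $\Gamma$ is \emph{exactly} proportional to the memory norm, so that the energy identity \eqref{Eidentity} closes into a scalar linear ODE driven by a quantity already known to decay. First I would fix a regular datum $z_0=(u_0,\eta_0)\in\D(\A)$ and set $\hat z_0=(u_0,\Lambda\eta_0)\in\hat\H$. By Lemma \ref{iddu} the first five components of $S(t)z_0$ and $\hat S(t)\hat z_0$ coincide; denote by $P(t)$ their common contribution to the energy, namely
$$
P(t)=\kappa\|\varphi_x+\psi\|^2+\rho_1\|\varphi_t\|^2+b\|\psi_x\|^2+\rho_2\|\psi_t\|^2+\rho_3\|\theta\|^2
$$
evaluated along either solution. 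Then $E(t)=P(t)+\frac1\beta\|\eta^t\|_\M^2$, while $\|\hat S(t)\hat z_0\|_{\hat\H}^2=P(t)+\tau\|\Lambda\eta^t\|^2\geq P(t)$ since, again by Lemma \ref{iddu}, $\Lambda\eta^t$ is the $q$-component of $\hat S(t)\hat z_0$.

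Next I would transfer the decay of the mechanical-thermal part. Using \eqref{trerry} at $t=0$ gives $\|\hat z_0\|_{\hat\H}^2=P(0)+\tau\|\Lambda\eta_0\|^2\leq P(0)+\frac1\beta\|\eta_0\|_\M^2=E(0)$, so the exponential stability of $\hat S(t)$ yields, for suitable $\omega>0$ and $C\geq1$,
$$
P(t)\leq \|\hat S(t)\hat z_0\|_{\hat\H}^2\leq C^2\e^{-2\omega t}\|\hat z_0\|_{\hat\H}^2\leq C^2\e^{-2\omega t}E(0).
$$
The key observation is now kernel-specific: since $\mu(s)=\frac{\beta^2}{\tau^2}\e^{-s\beta/\tau}$ satisfies $\mu'=-\frac\beta\tau\mu$, the defining formula of $\Gamma$ reduces to $\Gamma[\eta]=\frac\beta\tau\|\eta\|_\M^2$. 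Substituting into the energy identity \eqref{Eidentity} and using $\frac1\beta\|\eta^t\|_\M^2=E(t)-P(t)$, I obtain the scalar balance
$$
\ddt E(t)=-\frac1\beta\Gamma[\eta^t]=-\frac1\tau\|\eta^t\|_\M^2=-\frac\beta\tau\big(E(t)-P(t)\big),
$$
that is, $\ddt E+\frac\beta\tau E=\frac\beta\tau P(t)\leq \frac\beta\tau C^2\e^{-2\omega t}E(0)$.

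From here the conclusion is a routine comparison. Integrating the differential inequality gives
$$
E(t)\leq \e^{-\frac\beta\tau t}E(0)+\frac\beta\tau\,C^2E(0)\int_0^t \e^{-\frac\beta\tau(t-r)}\e^{-2\omega r}\,\d r,
$$
and a direct evaluation of the convolution bounds it by $C'\e^{-2\omega' t}$ for any rate $0<\omega'<\min\{\omega,\frac{\beta}{2\tau}\}$ (the resonant case $2\omega=\beta/\tau$ producing only a harmless polynomial factor). Thus $E(t)\leq C''\e^{-2\omega' t}E(0)$ for some $C''\geq1$, which is exactly the asserted exponential decay; density of $\D(\A)$ in $\H$ together with the strong continuity of $S(t)$ extends the bound to every $z_0\in\H$.

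The main obstacle is conceptual rather than computational: the semigroup $\hat S(t)$ only controls $\Lambda\eta^t$, i.e.\ the single scalar-in-$s$ projection of the memory given by $q$, and \emph{not} the full memory norm $\|\eta^t\|_\M$ appearing in $\|z\|_\H$. Attempting to recover $\|\eta^t\|_\M$ directly from the representation formula for $\eta^t$ would force one to estimate $\|\theta_x\|$, a quantity not furnished by the $\hat\H$-decay. The resolution, and the one genuinely kernel-dependent step, is the identity $\Gamma[\eta]=\frac\beta\tau\|\eta\|_\M^2$, which closes the energy balance into a scalar ODE and lets the already-established decay of $P(t)$ drive the decay of $\|\eta^t\|_\M^2=\beta\,(E-P)$.
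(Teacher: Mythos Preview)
Your proof is correct and follows essentially the same route as the paper: bound the five-component energy $P(t)$ via the exponential stability of $\hat S(t)$ together with \eqref{trerry}, then feed this into the energy identity \eqref{Eidentity} combined with the control of $\|\eta\|_\M^2$ by $\Gamma[\eta]$ to obtain a scalar Gronwall-type inequality for $E(t)$. The only cosmetic difference is that you exploit the exact identity $\Gamma[\eta]=\tfrac{\beta}{\tau}\|\eta\|_\M^2$ (valid for this specific exponential kernel), whereas the paper invokes the general inequality \eqref{NormaETA} with the abstract constant $\nu$, which for the kernel at hand is precisely $\beta/\tau$.
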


\begin{proof}
To simplify the notation, we introduce the 5-component space
$$
\V = H_0^1({\mathfrak I})\times L^2({\mathfrak I})\times H^1_*({\mathfrak I}) \times L^2_*({\mathfrak I}) \times L^2({\mathfrak I})
$$
normed by
$$
\|( \varphi,\tilde\varphi,\psi,\tilde\psi,\theta)\|_{\V}^2 = \kappa\|\varphi_x+\psi\|^2 +\rho_1\|\tilde\varphi\|^2
+b\|\psi_x\|^2+\rho_2\|\tilde\psi\|^2 + \rho_3\|\theta\|^2.
$$
For a fixed $z_0=(u_0,\eta_0)\in\H$, we set
$$
\hat z_0 = (u_0,\Lambda\eta_0)\in\hat\H.
$$
The exponential stability of $\hat S(t)$ and~\eqref{trerry}
imply that
$$\|u(t)\|_\V^2\leq \|\hat S(t)\hat z_0\|_{\hat \H}^2
\leq C\e^{-\omega t}\|\hat z_0\|_{\hat \H}^2
\leq C\e^{-\omega t}\|z_0\|_{\H}^2,$$
for some $\omega>0$ and $C\geq 1$.
Again, on account of Lemma~\ref{iddu},
$u(t)$ denotes the first 5 components of either solution.
Thus,
exploiting the energy identity~\eqref{Eidentity} together with~\eqref{NormaETA},
we arrive at the differential inequality
$$\ddt \|S(t)z_0\|_{\H}^2+\nu \|S(t)z_0\|_{\H}^2\leq \nu \|u(t)\|_\V^2\leq C \nu\e^{-\omega t}\|z_0\|_{\H}^2.
$$
A standard application of
the Gronwall entails the sought exponential decay estimate.
\end{proof}



\end{document}